\DeclareMathOperator{\ISO}{Isom}
\DeclareMathOperator{\PSL}{PSL}
\DeclareMathOperator{\F}{F}
\DeclareMathOperator{\tr}{trace}
\DeclareMathOperator{\SG}{\Gamma}
\DeclareMathOperator{\gir}{girth}
\DeclareMathOperator{\sys}{sys}
\DeclareMathOperator{\dist}{dist}
\DeclareMathOperator{\per}{Perm}
\DeclareMathOperator{\supp}{supp}
\DeclareMathOperator{\vol}{vol}
\DeclareMathOperator{\AS}{\mathcal{AS}}
\newtheorem{definition}{Definition}
\newtheorem{theorem}{Theorem}[section]
\newtheorem{lemma}[theorem]{Lemma}
\newtheorem{proposition}[theorem]{Proposition}
\newtheorem{corollary}[theorem]{Corollary}
\newtheorem*{rem}{Remark}
\newtheorem*{notation*}{Notation}
\begin{document}

\title{Asymptotic properties of the set of systoles of arithmetic Riemann surfaces} 
\author{Cayo D\'oria}
\address{
Universidade Federal de Goi\'as \\
Instituto de Matem\'atica e Estat\'istica \\
Rua Jacarand\'a -- Ch\'acaras Calif\'ornia, 74001-970. Goi\^ania - GO, Brazil.
}
\email{cayodoria@ufg.br}

\begin{abstract}
The purpose this article is to try to understand the mysterious coincidence between the asymptotic behavior of the volumes of the Moduli Space of closed hyperbolic surfaces of genus $g$ with respect to the Weil-Petersson metric and the asymptotic behavior of the number of arithmetic closed hyperbolic surfaces of genus $g$. If the set of arithmetic surfaces is well distributed then its image for any interesting function should be well distributed too. We investigate the distribution of the function systole. We give several results indicating that the systoles of arithmetic surfaces can not be concentrated, consequently the same holds for the set of arithmetic surfaces. The proofs are based on different techniques: combinatorics (obtaining regular graphs with any girth from results of B.~Bollobas and constructions with cages and Ramanujan graphs), group theory (constructing finite index subgroups of surface groups from finite index subgroups of free groups using results of G.~Baumslag) and geometric group theory (linking the geometry of graphs with the geometry of coverings of a surface).        
\end{abstract}

\maketitle
\section{Introduction}

For any integer $g \geq 2$, let $\mathcal{M}_g$ be the moduli space of orientable, closed, hyperbolic surfaces of genus $g$. There exists a continuous function in $\mathcal{M}_g$ called \emph{systole}, denoted by $\sys$ and given by $\sys(S)=\min \{ l(\gamma) ; \gamma \subset S \, \mbox{is a closed geodesic} \}$, where $l(\gamma)$ denotes the length of the geodesic. For any $S \in \mathcal{M}_g$, $\sys(S)$ coincides with twice of its injectivity radius. Hence by Mumford's Compactness Theorem, for any positive number $\mu$, the set $\mathcal{M}_{g,\mu} = \{ S \in \mathcal{M}_g \, | \, \sys(S) \geq \mu \}$ is  compact for any $g$.  Moreover, there exists a positive constant $A$ which does not depend on $g$ such that for any $S \in \mathcal{M}_g$, $\sys(S) \leq 2 \log(g) + A$. 

For more relations between $sys$ and topological/geometric properties of $\mathcal{M}_g$, see for example \cite{Schaller99} and the references therein. 

In $\mathcal{M}= \displaystyle\cup_{g \geq 2} \mathcal{M}_g$ we can define an equivalence relation $\sim$, where $S_1 \sim S_2$ if there exists $S \in \mathcal{M}$ which covers $S_1$ and $S_2$. In this case $S_1$ and $S_2$ are called \emph{commensurable}. Any equivalence class $\mathcal{C} \in \mathcal{M} / \sim $ is called a commensurability class.

We have a special subset $\AS = \{\mbox{arithmetic compact hyperbolic surfaces}\}$. These surfaces have remarkable geometric properties, for example, the Hurwitz upper bound for the cardinality of the isometry group of a compact hyperbolic surface is attained only by arithmetic surfaces (see \cite{Bel97}) and there exist sequences $\{ S_i \}$ of compact hyperbolic surfaces with genus of $S_i = g_i$ arbitrarily large and $\sys(S_i) \gtrsim \eta \log(g_i)$ (notice that necessarily $\eta \leq 2).$ The best known constant is $\eta = \frac{4}{3}$  which is realized by congruence coverings of an arithmetic hyperbolic surface \cite{BS94}. In \cite{Schaller97}, Paul Schaller considers the following Hypothesis: ``The definition of the best metric should be chosen such that (some) arithmetic surfaces are among the surfaces with the best metric". 

The set $\AS$ is closed with respect to $\sim$, i.e. if $S_1 \in \AS$ and $S_2 \sim S_1$ then $S_2 \in \AS$. We will recall the precise definition of arithmetic hyperbolic surfaces in Section \ref{geoappl}. Now the important fact which we need to recall is that for any $g$ the set $\AS \cap \mathcal{M}_g$ is finite. Moreover, it is possible to quantify this finiteness. Indeed, for any $g \geq 2$, let $\AS(g)$ be the number of arithmetic closed hyperbolic surfaces of genus $g$. The asymptotic growth of $\log \AS(g)$ was given in \cite{BGLS10}, they showed that $$\lim_{g \to \infty} \frac{\log \AS(g)}{g \log g} = 2.$$

On the other hand, if we consider the Weil-Petersson metric on $\mathcal{M}_{g}$, in \cite{ST01} it was shown that the asymptotic growth of $\log \vol(\mathcal{M}_g)$ with respect to this metric is given by $$ \lim_{g \to \infty} \frac{\log \vol(\mathcal{M}_g)}{g \log g}= 2.$$ 

The main motivation of this work is to try to understand this amazing coincidence. The natural question is: ``How is the set of arithmetic surfaces distributed in $\mathcal{M}$''? 

In this paper we are interested in understanding how the set $\AS$ is distributed in $\mathcal{M}$ in terms of the functions $\log \sys$ and $\sys$. More precisely, what can we say about the set $\log \sys(\AS)$ or $\sys(\AS)$? 

Firstly, we can say that there exists a sequence $g_i \to \infty$  such that $\AS \cap \mathcal{M}_{g_i}$ has diameter arbitrary large when $i$ goes to infinity with respect to Teichmuller and Weil-Petersson metrics. 

Indeed, the function $\log \sys(S)$ is a $1-$Lipschitz proper function with respect to the Teichmuller metric on $\mathcal{M}_g$ (see \cite[Thm.~6.4.3]{Buser92}), and in a recent paper \cite{WU19}, Yunhui Wu showed that there exists a constant $k>0$ which does not depend on $g$ such that the function $\sqrt{\sys}$ is $k-$Lipschitz with respect to Weil-Petersson metric. 

We consider a sequence of congruence coverings $S_i$ of a given arithmetic hyperbolic surface $S$ mentioned above and for the same degree we consider a sequence of coverings $T_i$ of $S$ such that $\sys(T_i) = \sys(S)=s$. If $g_i$ is the genus of $S_i$ and $T_i$, since $\sys(S_i) \geq \frac{4}{3} \log(g_i) - B$ for some $B>0$ which does not depend on $i$, then the diameter of $\AS \cap \mathcal{M}_{g_i} \geq d(S_i,T_i)$ and in both metrics this distance is bounded below by a function which goes to infinity when $i$ goes to infinity. 

Our first result shows that in terms of $\log \sys$, any sequence of hyperbolic surfaces of different genera and systole uniformly bounded below is asymptotically aproximated by a sequence of surfaces in a commensurability class which contains a surface of genus $2$. 
 
\begin{theorem} \label{teoA}
Let $\mathcal{C}$ be a commensurability class containing a surface of genus $2$. Let $\left( S_g \right) $ be a sequence in $\mathcal{M}$ with $S_g \in \mathcal{M}_{g,\mu}$ for some uniform constant $\mu>0$. Then there exists a sequence $\left( T_g \right) $ with $T_g \in \mathcal{C} \cap \mathcal{M}_g$ such that $$ \log \sys(T_g) \asymp \log \sys(S_g) .$$  
\end{theorem}

In this paper, the relation $f(x) \asymp g(x) $  for two positive functions means that there exists a constant $L >0 $ such that $\frac{1}{L} g(x) \leq f(x) \leq L g(x)$ for all $x$.

In Section \ref{geoappl} we will see that there exist arithmetic surfaces of genus $2$. The following corollary says that, in terms of $\log \sys, $ the set of arithmetic surfaces is asymptotically, well distributed in the thick part of $\mathcal{M}$.
 
\begin{corollary}
Let $\left( S_g \right) $ be a sequence in $\mathcal{M}$ with $S_g \in \mathcal{M}_{g,\mu}$ for some uniform constant $\mu>0$. Then there exists a sequence $\left( A_g \right) $ with $A_g \in \AS \cap \mathcal{M}_g$ such that $$ \log \sys(A_g) \asymp \log \sys(S_g) .$$ 
\end{corollary}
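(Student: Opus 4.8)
The Corollary is essentially immediate from Theorem A once one knows the relevant fact about arithmetic surfaces, so my plan is to reduce it to that theorem. The key input — which the text promises will be established in Section~\ref{geoappl} — is that there exists an arithmetic compact hyperbolic surface $S_0$ of genus $2$. Fix such an $S_0$ and let $\mathcal{C}_0$ be its commensurability class. Since $\AS$ is closed under $\sim$, every surface in $\mathcal{C}_0$ is arithmetic, i.e.\ $\mathcal{C}_0 \subseteq \AS$; in particular $\mathcal{C}_0 \cap \mathcal{M}_g \subseteq \AS \cap \mathcal{M}_g$ for every $g$.

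Now apply Theorem~\ref{teoA} to the sequence $(S_g)$ with the commensurability class $\mathcal{C} = \mathcal{C}_0$ (which contains a surface of genus $2$ by construction). This produces a sequence $(T_g)$ with $T_g \in \mathcal{C}_0 \cap \mathcal{M}_g$ and $\log \sys(T_g) \asymp \log \sys(S_g)$. Setting $A_g := T_g$, we have $A_g \in \AS \cap \mathcal{M}_g$ by the inclusion above, and $\log \sys(A_g) \asymp \log \sys(S_g)$, which is exactly the assertion of the Corollary.

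I do not expect any genuine obstacle here: the only nontrivial ingredient is the existence of an arithmetic genus~$2$ surface, and that is precisely what Section~\ref{geoappl} is set up to provide (such surfaces are classical — one can take, for instance, a surface uniformized by a suitable arithmetic Fuchsian group, e.g.\ arising from a quaternion algebra over a totally real field, having signature $(2;-)$). One should perhaps remark that the implied constant $L$ in the relation $\asymp$ may depend on $\mathcal{C}_0$ (hence is absolute once $S_0$ is fixed), just as in Theorem~\ref{teoA} it may depend on $\mathcal{C}$ and on $\mu$; this is harmless since $S_0$ is chosen once and for all. If desired, one can note that the same argument shows the conclusion holds with $(A_g)$ lying in any single fixed commensurability class of arithmetic surfaces that meets $\mathcal{M}_2$.
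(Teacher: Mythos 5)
Your argument is correct and is exactly the paper's intended deduction: the existence of an arithmetic genus~$2$ surface (e.g.\ Schaller's maximal-systole surface, a finite-index subgroup of the $(2,3,8)$ triangle group, discussed in Section~\ref{geoappl}) together with the closure of $\AS$ under commensurability reduces the Corollary immediately to Theorem~\ref{teoA}. No further comment is needed.
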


The second result is about the multiplicity of a systole in a commensurability class. For infinitely many values of systoles, the multiplicity is infinite.
\begin{theorem} \label{teoB}
 Let $\mathcal{C}$ be a commensurability class containing a surface of genus $2$. Then there exists an unbounded sequence of real numbers $s_1<s_2 < \cdots$, such that for any $s_j$ the class $\mathcal{C}$ contains an infinite set of surfaces with systole equal to $s_j$.
\end{theorem}

Again we have a corollary for the set of arithmetic surfaces. Recall that a function is $n:1$ for some $n \in \mathbb{N}$ if the pre-image of any point has cardinality at most $n$.
\begin{corollary}
 The function $\sys: \AS \rightarrow \mathbb{R} $ cannot be $n:1$ for any $n$.
\end{corollary}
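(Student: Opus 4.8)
The plan is to deduce the corollary directly from Theorem~\ref{teoB}, using the two facts recalled in the introduction: that there exists an arithmetic closed hyperbolic surface of genus $2$ (established in Section~\ref{geoappl}), and that $\AS$ is closed under the commensurability relation $\sim$.

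First I would fix an arithmetic surface $S_0$ of genus $2$ and let $\mathcal{C}$ be its commensurability class. Since any surface commensurable with an arithmetic one is itself arithmetic, we get $\mathcal{C} \subseteq \AS$. Now apply Theorem~\ref{teoB} to this class: it yields an unbounded sequence $s_1 < s_2 < \cdots$ and, in particular, a value $s_j$ for which $\mathcal{C}$ contains an infinite set of (pairwise non-isometric) surfaces with systole exactly $s_j$. All of these lie in $\AS$, so the fiber $\sys^{-1}(s_j) \cap \AS$ is infinite. Finally, recall that an $n:1$ map has every fiber of cardinality at most $n$; since we have exhibited one fiber of infinite cardinality, no finite $n$ can work, which is exactly the assertion that $\sys\colon \AS \to \mathbb{R}$ is not $n:1$ for any $n$. (The full sequence $(s_j)$ in fact gives infinitely many values of the systole with infinite multiplicity inside $\AS$, which is a stronger conclusion.)

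The only genuine content lies in Theorem~\ref{teoB} itself; the step of transferring the conclusion from an arbitrary commensurability class containing a genus-$2$ surface to the arithmetic locus is immediate once one knows that genus-$2$ arithmetic surfaces exist and that arithmeticity is a commensurability invariant, so I expect no real obstacle in the proof of this corollary beyond invoking those two inputs.
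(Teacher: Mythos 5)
Your proposal is correct and is exactly the argument the paper intends: Theorem~\ref{teoB} applied to the commensurability class of an arithmetic genus-$2$ surface (which exists by Schaller's example and lies entirely in $\AS$ since arithmeticity is a commensurability invariant) produces an infinite fiber of $\sys$. No issues.
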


The last theorem gives us an additive subset contained in $\AS$.

\begin{theorem} \label{teoC}
There exists a constant $s=2\cosh^{-1}(1+\sqrt{2})=3.057...$, such that for any $k \in \mathbb{N}$ we can find an arithmetic surface $S_k \in \AS$ such that $\sys(S_k)=ks$.
\end{theorem}

Note that the Theorem \ref{teoC} shows that the set of systoles of arithmetic hyperbolic surfaces is well distributed on the real line.
The proofs of Theorems \ref{teoA}, \ref{teoB}, \ref{teoC} are based on the same principle, we fix a surface group of genus 2 and construct sequences of subgroups of finite index, whose geometric properties which allow us to apply the Milnor-Schwarz Lemma to obtain a relation between the geometry of the group with some word metric and the geometry of the action by the group on hyperbolic space. The systole of the surface quotient is related to the minimal length of any non trivial element in the subgroup. In order to obtain these subgroups we use two classical results of very distinct nature. 

In \cite{Baums62}, Gilbert Baumslag showed that surface groups are residually free, and five years later his brother Benjamim Baumslag generalised this result in \cite{Baumslag67} showing that surface groups are fully residually free. A group $G$ is fully residually free if for any finite set $X \subset G$ which does not contain the identity there exists a normal subgroup $N \lhd G$ such that $N \cap X = \emptyset$ and $G / N$ is a free group. Since the surface group of genus 2 is an amalgamated product of two copies of the free group of rank $2$, we can use the proof of Benjamin Baumslag to get for any ball in the surface group with a word metric a normal subgroup which intersects this ball trivially and the quotient is a free group of rank $2$. We will use the notations $\SG_2$ for the surface group of genus $2$ and $\F_2$ for the free group of rank $2$.

Using the fully residually freedom of $\SG_2$ we can find subgroups of this group from subgroups of $\F_2$ of finite index. But any finite index subgroup of $\F_2$ can be viewed geometrically as a connected $4$-regular graph on finite set of vertices (a graph is $k$-regular if any vertice has degree $k$) and the converse is true (see Section \ref{4reg&Schreier}). Finding subgroups of free groups with a given minimal length is equivalent to obtaining finite $4$-regular graphs with a given girth (the girth of a graph is the the minimal length of a circuit on it). In \cite{BOL80}, Bollobas gave a probabilistic model for the set of regular graphs such that the random variable girth has a given asymptotic behavior, which says as a corollary that for any $g$ and $n$ sufficiently large, there exist connected $4$-regular regular graphs on $n$ vertices of girth $g$.

The results of Bollobas and Baumslag allow us to translate the geometric problem to a combinatorial problem which we then solve. Now we will give an outline of the paper. In Section \ref{4reg&Schreier} we will make the translation of combinatorics to group theory, we will recall the proof of Gross, which says that connected regular graphs of even degree are Schreier graphs and the additional information that we give is that the girth of the graph is equal to the minimal length of any element of the subgroup which gives the corresponding Schreier graph. In Section \ref{construction} we will construct, by using cages and Ramanujan graphs for any $3 \leq g  \leq c\log(n)$ and $n$ sufficiently large, a $4$-regular graph on $n$ vertices with girth $g$,  where $c$ is an absolute constant. This result has an independent interest  because in this particular case it is an uniform version of the Bollobas corollary mentioned above. Besides this, we will use the Bollobas model in order to show the existence of connected finite $4$-regular graphs with given girth and $2$-girth (the $2$-girth of a graph is the second smallest length of a circuit). In Section \ref{baumsconcrete} we will give a quantitative proof of Baumslag's result that $\SG_2$ is fully residually free of rank 2. For a suitable set of generators of $\SG_2$ we will show that for any $k$ there exists an epimorphism $\psi_k: \SG_2 \rightarrow \F_2$ such that the ball of radius $k$ in $\SG_2$ intersects the kernel of $\psi_k$ trivially. In this section we will also construct a sequence of subgroups of finite index in $\F_2$ such that the element of minimal length is $x^k$ and the second smallest length can be arbitrarily large. In Section \ref{geoappl} we will give a precise definition of arithmetic surfaces and combine the combinatorial and group theoretic facts in order to give the proofs of Theorems \ref{teoA}, \ref{teoB} and \ref{teoC}. The link with geometry is the Milnor-Schwarz Lemma, the main ideia of the proofs in this section is to compare the girth of regular graphs with the systole of surfaces.

\section{The interplay between group theory and combinatorics} \label{4reg&Schreier}

Let $G$ be a finitely generated group, let $H$ be a subgroup of $G$, and let $\Sigma = \{ g_1^{\pm 1}, \cdots, g_r^{\pm 1}\}$ be a symmetric set of generators of $G$. The (right) Schreier coset graph for that group, subgroup, and generating set is defined as follows. Its vertex
set is the set of right cosets of $H$ in $G$. For each right coset $H_i$ and each generator $g_j$ there is an edge from $H_i$ to the right coset $H_i g_j$. 

Let $\Omega$ be a graph and $k>1$ a natural number, $\Omega$ is $k-$regular if every vertex of $\Omega$ has degree $k$. We will denote by $E(\Omega)$ the set of edges of $\Omega$ and by $V(\Omega)$ the set of vertices. An $s$-factor of $\Omega$ is a subgraph $K$ of $\Omega$ which is regular of degree $s$ and which contains every vertex of $\Omega$. When the edges of $\Omega$ can be partitioned into $s$-factors, we say that $\Omega$ is $s$-factorable.

\begin{theorem}[Petersen, 1891]\label{petersen}
Every regular graph (connected or not) of even degree is $2$-factorable.
\end{theorem}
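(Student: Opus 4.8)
The plan is to reduce Petersen's theorem to the existence of a $1$-factorization of a regular \emph{bipartite} graph, which in turn follows from Hall's marriage theorem. First I would dispose of the disconnected case: every connected component of a $2k$-regular graph is again $2k$-regular, so if each component admits a decomposition into $k$ $2$-factors, then for each index $j$ the union over all components of the $j$-th $2$-factors is a $2$-factor of $\Omega$, and these $k$ unions partition $E(\Omega)$. Hence it suffices to treat a connected graph $\Omega$ that is $2k$-regular for some $k \geq 1$ (the case $k=0$ being vacuous).

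Since $\Omega$ is connected and every vertex has even degree, Euler's theorem gives a closed Eulerian walk traversing each edge exactly once. Orienting each edge in the direction it is traversed, and noting that the walk enters and leaves each vertex $v$ exactly $\deg(v)/2 = k$ times, we obtain an orientation of $\Omega$ in which every vertex has in-degree $k$ and out-degree $k$. Next I would pass to the associated bipartite graph $B$ with parts $V^{+}=\{v^{+}: v\in V(\Omega)\}$ and $V^{-}=\{v^{-}: v\in V(\Omega)\}$, putting an edge $u^{+}v^{-}$ in $B$ for each arc $u\to v$ of the oriented $\Omega$. The in/out-degree conditions make $B$ a $k$-regular bipartite graph with $|V^{+}|=|V^{-}|$.

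The core combinatorial input is then that a $k$-regular bipartite graph has a $1$-factorization, i.e.\ $E(B)$ partitions into $k$ perfect matchings $M_{1},\dots,M_{k}$. This is obtained by repeatedly extracting a perfect matching: Hall's condition holds in any regular bipartite graph because for $A\subseteq V^{+}$ the $k|A|$ edges incident to $A$ all land in $N(A)$, so $k|N(A)|\geq k|A|$ and $|N(A)|\geq|A|$; thus there is a matching saturating $V^{+}$, hence (by the size equality) a perfect matching; deleting it leaves a $(k-1)$-regular bipartite graph and one iterates.

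Finally I would translate each $M_{i}$ back to $\Omega$. The arcs of $\Omega$ corresponding to the edges of $M_{i}$ form a spanning subgraph in which each vertex $v$ has exactly one outgoing arc and one incoming arc — two distinct edges of $\Omega$ — hence underlying degree exactly $2$, so $M_{i}$ yields a $2$-factor $F_{i}$ of $\Omega$. Because the $M_{i}$ partition $E(B)$ and the edges of $B$ correspond bijectively to the edges of $\Omega$, the factors $F_{1},\dots,F_{k}$ partition $E(\Omega)$, so $\Omega$ is $2$-factorable. The only genuinely delicate points are the verification of Hall's condition for the bipartite extraction step and, if multigraphs are allowed, checking that the incoming and outgoing arc at each vertex really are two separate edges so that each $F_{i}$ is honestly $2$-regular; both are routine once the Eulerian orientation and the graph $B$ are in place, so I do not expect a serious obstacle.
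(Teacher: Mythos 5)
Your proof is correct and is essentially the argument the paper points to: it cites Diestel (Corollary 2.1.5) rather than proving the theorem, and Diestel's proof is exactly your route via an Eulerian orientation of each component, the associated $k$-regular bipartite graph, and repeated extraction of perfect matchings by Hall/K\"onig. Your handling of the disconnected case and your remark about loops and parallel edges in the multigraph setting are the right points to check, and they go through as you describe.
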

\begin{proof}
 See \cite[Corollary 2.1.5]{Diestel00} for a short proof.
\end{proof}

\begin{rem}
Note that if $\Omega$ is $2k$-regular then $\Omega$ necessarily is partitioned by $k$ distinct $2$-factors.
\end{rem}

The proof of the following proposition is a modification of the proof of \\ Theorem 2 in \cite{Gross77}.

Recall that on a finite graph $\Omega$ a circuit is a closed path without repetitions of edges. The \emph{girth} of $\Omega$, denoted by $\gir(\Omega)$ is the length of minimal circuit. If we consider the set $C(\Omega)$ of circuits on $\Omega$ and $L:C(\Omega) \rightarrow \mathbb{Z}_{>0}$ the function which associates for each circuit its length, the set $L(C(\Omega))=\{ l_1 < l_2 < \cdots < l_\omega \}$ is called the length spectrum of $\Omega$ and the \emph{$j$-girth} of $\Omega$ if exists is the number $l_j$, in particular $\gir(\Omega)$ is the $1$-girth of $\Omega$. 

Denote by $\F_2$ as the free group of rank $2$ with a fixed set of generators $A=\{x^{\pm 1},y^{\pm 1}\}$.

\begin{proposition} \label{schreier}
For any connected $4$-regular graph $\Omega$ on $n$ vertices, there exists a subgroup $\Gamma < \F_2$ of index $n$ with the following properties:
\begin{enumerate}
 \item The Schreier graph of cosets of $\Gamma$ on the symmetric generating set $A$ is isomorphic to $\Omega$;
 \item If $g=\gir(\Omega)$ then $g=\min \{ l_{A}(\gamma) \, | \, \gamma \in \Gamma - \{1\} \}$, where $l_A(\gamma)$ denotes the length of any element $\gamma \in \F_2$ with respect to $A$. 
\end{enumerate}
\end{proposition}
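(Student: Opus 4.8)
The plan is to build the subgroup $\Gamma$ directly from a labeling of the edges of $\Omega$ by generators, using Petersen's theorem to organize that labeling, and then to identify $\Gamma$ as a vertex stabilizer in the induced permutation action on $V(\Omega)$.

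First I would invoke Theorem \ref{petersen}: since $\Omega$ is $4$-regular, its edge set decomposes into two $2$-factors $F_x$ and $F_y$, each of which is a disjoint union of cycles covering all $n$ vertices. On each $2$-factor one can choose an orientation of every cycle, so that $F_x$ becomes a permutation $\sigma_x \in \per(V(\Omega))$ (send each vertex to its successor along its $x$-cycle) and similarly $F_y$ gives $\sigma_y$. This defines a homomorphism $\varphi : \F_2 \to \per(V(\Omega))$ by $x \mapsto \sigma_x$, $y \mapsto \sigma_y$; connectedness of $\Omega$ means the image acts transitively on $V(\Omega)$. Fix a base vertex $v_0$ and set $\Gamma = \{ \gamma \in \F_2 : \varphi(\gamma)(v_0) = v_0 \}$, the point stabilizer. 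By the orbit-stabilizer theorem $[\F_2 : \Gamma] = |V(\Omega)| = n$, and the standard dictionary (this is exactly the content of Gross's Theorem 2) shows that the Schreier coset graph of $\Gamma$ with respect to $A$ is isomorphic to $\Omega$, the coset $\Gamma w$ corresponding to the vertex $\varphi(w)(v_0)$ and the $x$-edge (resp. $y$-edge) at that vertex matching the $\sigma_x$-edge (resp. $\sigma_y$-edge) of $\Omega$. This gives item (1).

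For item (2), the key observation is that a reduced word $\gamma = a_1 a_2 \cdots a_\ell$ in $A$ of length $\ell$ traces out, starting from any vertex $v$, a walk $v, \varphi(a_1)(v), \varphi(a_1 a_2)(v), \dots$ in $\Omega$ that never backtracks on an edge: because the word is reduced, consecutive letters are never $z z^{-1}$, and in $\Omega$ the edge traversed by $z$ and the edge traversed by $z^{-1}$ from a given vertex are the same edge (they come from the same $2$-factor and $\sigma_z$, $\sigma_z^{-1}$ are inverse permutations along a cycle). Hence the only way for such a walk to return to its starting vertex — which is exactly what $\gamma \in \Gamma \setminus \{1\}$ with base point $v_0$ requires — is for it to close up into a genuine circuit in $\Omega$ of length $\ell$; conversely any circuit through $v_0$ of length $\ell$ reads off a reduced word of length $\ell$ lying in $\Gamma$. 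Therefore $\min\{ l_A(\gamma) : \gamma \in \Gamma \setminus \{1\}\}$ equals the length of the shortest circuit through $v_0$.

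The one remaining subtlety — and the step I expect to be the main obstacle to state cleanly — is that a priori the shortest circuit through the chosen base vertex $v_0$ could be longer than $\gir(\Omega)$, the shortest circuit \emph{somewhere} in $\Omega$. I would handle this by noting that in a vertex-transitive-\emph{enough} situation one can relocate the base point: since $\varphi(\F_2)$ acts transitively on $V(\Omega)$, conjugating $\Gamma$ by a suitable $w \in \F_2$ moves the base point to any desired vertex without changing the isomorphism type of the Schreier graph or the length spectrum $\{ l_A(\gamma) : \gamma \in \Gamma \setminus\{1\}\}$ (conjugation is an isometry of $\F_2$ for the word metric $l_A$ up to the bounded error $2\,l_A(w)$, but more to the point, a shortest circuit in $\Omega$ based at a vertex $v$ can be translated to a circuit of the same length based at $v_0$ because girth-realizing circuits, being simple cycles, can be read starting from any of their vertices and any vertex of $\Omega$ is reachable). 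Concretely: let $C$ be a circuit realizing $\gir(\Omega)$, pick a vertex $u$ on $C$, choose $w$ with $\varphi(w)(v_0) = u$, then $w^{-1}(\text{word of } C)w \in \Gamma$ — wait, rather, choose the base point from the start to lie on a girth-realizing circuit, i.e. select $v_0 \in V(C)$; this is legitimate since $\Gamma$ was only defined after fixing $v_0$. With that choice the shortest circuit through $v_0$ has length exactly $\gir(\Omega)$, and combined with the walk/word correspondence above we get $g = \gir(\Omega) = \min\{ l_A(\gamma) : \gamma \in \Gamma \setminus \{1\}\}$, completing item (2).
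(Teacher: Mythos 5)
Your proposal is correct and follows essentially the same route as the paper: Petersen's theorem to split the edges into two $2$-factors, orientations of their cycles giving permutations $\sigma_x,\sigma_y$ and hence a transitive action of $\F_2$, the subgroup $\Gamma$ taken as a point stabilizer, and the base vertex chosen from the start on a girth-realizing circuit so that its word gives an element of length exactly $g$. The only loose spot is the claim that a non-backtracking closed walk must ``close up into a genuine circuit'' (it may repeat edges, e.g.\ by going around a cycle twice), but the needed conclusion $l_A(\gamma)\geq g$ still holds since such a walk always contains a circuit, and the paper's own proof is no more precise on this point.
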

\begin{proof}
 Take a minimal circuit $C$ on $\Omega$ and  a vertex $v$ of $C$. Consider a bijection between $V(\Omega)$ and $\{1, \cdots, n\}$ such that $v$ corresponds to $1$. By Theorem \ref{petersen} $E(\Omega)=K_1 \sqcup K_2$ where $K_i$ is a $2$-factor.
 
 Each $K_i$ can be written as a disjoint union of circuits $C_i^1, \cdots, C_i^{t_i}$. For each $C_i^j$ assign an arbitrary orientation and consider the cyclic permutation $\pi_{ij} \in S_{n}$ which corresponds to the cyclic order in which the oriented circuit $C_i^j$ passes through vertices of $\Omega$. Then, for each $i=1,2$ consider the permutation $\pi_i = \pi_{i1} \cdots \pi_{it_i}$ and the homomorphism $\phi: \F_2 \rightarrow S_n$ given by  $\phi(x)=\pi_1$ and $\phi(y)=\pi_2$. 
 
 The subgroup $\Gamma= \{ t \in \F_2 | \,  \phi(t) \cdot 1=1\} $ has index $n$ because $\phi$ is transitive. Indeed, since $\Omega$ is connected, for each $i$ there exists a path joining the vertices $1$ and $i$, but each edge of this path is contained in $K_1$ or $K_2$, and in any case we can always go from a vertex to its adjacent in the path by the action of $\phi(x)^{\pm 1}$ or $\phi(y)^{\pm 1}$. Hence the path induces a word $w$ in $\{x^{\pm 1},y^{\pm 1}\}$ such that $i=\phi(w)(1)$. In particular, the minimal circuit $C$ is represented by a reduced word $W \in \F_2$ such that $l_A(W)=l(C)=g$ and it $\phi(W)(1) = 1$ then $W \in \Gamma$.
 
 To see the isomorphism in part $1$, take $\alpha_j \in \F_2$ such that $\phi(\alpha_j) \cdot 1 = j$ for any $j=1, \cdots, n$ and choose $\alpha_1 = 1$. The Schreier graph of cosets of $\Gamma$ on $A$ has the vertices $\{ \Gamma, \Gamma \alpha_2, \cdots, \Gamma \alpha_n \}$ and the bijection $\Gamma \alpha_j \longleftrightarrow j$ is an isomorphism of graphs. In fact,  $j$ is adjacent to $k$ if and only if $k=\pi_i^{\pm 1}(j)$ and this is equivalent to  $\phi(\alpha_k)(1) = \phi(a \alpha_j)(1)$ for some $a \in A$. But this means that $\Gamma \alpha_j = \Gamma \alpha_k a^{-1}$, i.e, $\Gamma \alpha_j$ and $\Gamma \alpha_k$ are adjacent.
 
 To complete the proof it remains to show that any non-trivial element $\gamma \in \Gamma$ satisfies $l_A(\gamma) \geq g$. In fact, any non-empty reduced word $\omega \in \Gamma$ corresponds to a closed path of length $l_A(\omega)$ in the Schreier graph of cosets of $\Gamma$. But  any closed path in $\Omega$ has length at least $g$ hence by the isomorphism we have $l_A(\omega) \geq g$. 
\end{proof}

\section{Combinatorial results } \label{construction}

\begin{definition}
 Let $G_1, G_2$ be  connected $4$-regular graphs and let $e_{i} \in E(G_{i})$ be edges. We define $G_1 \ast_{e_1,e_2} G_2$ as the graph obtained by cutting the edges $e_1,e_2$ and adding a vertex $v$ that attaches to the ends of $e_1$ and $e_2$.
\end{definition}


\begin{rem}
 Note that $G_1 \ast_{e_1,e_2} G_2$ is a connected $4$-regular graph on $|G_1|+|G_2|+1$ vertices.
\end{rem}

\begin{lemma} \label{Lema1}
 Let $G_1, G_2$ be connected $4$-regular graphs, if $g_{i}=\gir(G_i)$ then there exist edges $e_{i} \in E(G_{i})$ such that $\gir(G_1 \ast_{e_1,e_2} G_2) = \min \{g_1,g_2\} $.
\end{lemma}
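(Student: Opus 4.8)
\emph{Proof strategy.} Write $H = G_1 \ast_{e_1,e_2} G_2$, put $e_i = \{a_i,b_i\}$, and let $v$ be the new vertex joined to $a_1,b_1,a_2,b_2$; the graphs $G_1,G_2$ are understood to be on disjoint vertex sets, as in the construction. Since the operation is symmetric in the two factors, assume without loss of generality that $g_1 \leq g_2$, so $\min\{g_1,g_2\}=g_1$. Recall that the girth is realised by a circuit with no repeated vertex, so to bound $\gir(H)$ from below it suffices to bound the length of every such circuit. The plan is to prove two inequalities: $\gir(H)\geq g_1$ for \emph{every} choice of $e_1,e_2$, and $\gir(H)\leq g_1$ for a \emph{suitable} choice of $e_1$ (with $e_2$ arbitrary).

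For the lower bound I would sort the (vertex-simple) circuits of $H$ according to whether they meet $v$. The only edges deleted in forming $H$ are $e_1,e_2$ and the only new edges are the four edges at $v$, so $H-v$ is the disjoint union of a graph on $V(G_1)$, namely $G_1-e_1$, and a graph on $V(G_2)$, namely $G_2-e_2$. A circuit of $H$ avoiding $v$ is connected, hence lies inside one of these two pieces, hence is a circuit of $G_i$ for some $i$, and so has length $\geq \gir(G_i)=g_i\geq g_1$. If a circuit $Z$ passes through $v$, then $v$ has exactly two neighbours $p,q$ on $Z$; these are distinct elements of $\{a_1,b_1,a_2,b_2\}$, and deleting $v$ from $Z$ leaves a simple $p$--$q$ path inside $H-v$. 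Since $\{a_1,b_1\}\subseteq V(G_1)$ and $\{a_2,b_2\}\subseteq V(G_2)$ lie in different components of $H-v$, we must have $\{p,q\}=\{a_1,b_1\}$ or $\{p,q\}=\{a_2,b_2\}$. In the first case the path $Z-v$ lies in $G_1-e_1$ and, closed up by the edge $e_1$, becomes a circuit of $G_1$; hence $Z-v$ has at least $g_1-1$ edges and $Z$ has length at least $g_1+1$. The second case gives length $\geq g_2+1$ symmetrically. In all cases the length is $\geq g_1$, so $\gir(H)\geq g_1$.

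For the upper bound I would fix a shortest circuit $C$ of $G_1$ and choose $e_1\in E(G_1)\setminus E(C)$; this is possible because $C$ (being vertex-simple) uses at most $|V(G_1)|$ edges, whereas $G_1$, being $4$-regular, has $2|V(G_1)|$ edges. Take $e_2\in E(G_2)$ arbitrary. Then $C$ survives untouched in $H$: it avoids $e_1$ and does not meet $v$, so it is a circuit of $H$ of length $g_1$, whence $\gir(H)\leq g_1$. Combined with the previous paragraph, $\gir(H)=g_1=\min\{g_1,g_2\}$.

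The one step that needs genuine care is the circuit-through-$v$ case of the lower bound: one must observe that $H-v$ separates into the two pieces $G_i-e_i$, so that a circuit through $v$ cannot cross from the $G_1$-side to the $G_2$-side, and then convert a path joining the two ends of $e_1$ (respectively $e_2$) into a circuit of $G_1$ (respectively $G_2$) in order to bound its length. The remaining verifications — that $H$ is a connected $4$-regular graph, and the edge count producing $e_1$ — are immediate.
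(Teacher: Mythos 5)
Your proposal is correct and follows essentially the same route as the paper: choose the cut edge off a minimal circuit so that circuit survives (giving the upper bound), and for the lower bound split circuits of $G_1 \ast_{e_1,e_2} G_2$ into those avoiding the new vertex $v$ (which live in some $G_i \setminus e_i$) and those through $v$ (where the path between the two endpoints of $e_i$, closed up by $e_i$, yields a circuit of $G_i$ and hence has length at least $g_i - 1$). Your treatment is in fact slightly more careful than the paper's at the step where the circuit is assumed to pass through $v$ only once, which you justify via vertex-simplicity of a shortest circuit.
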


\begin{proof}
 We can choose a minimal circuit $C_i$ in each $G_i$ and take the edge $e_i$ in the complement of $C_i$ for $i=1,2$. Consider now a circuit $C \subset G_1 \ast_{e_1,e_2} G_2$. If $C \subset G_i \backslash e_i$ for some $i=1,2$ then $l(C) \geq \min \{g_1,g_2\}$.

 Otherwise assume that $C$ passes through $v$. We can suppose that $C$ passes through $v$ once. In this case we can decompose $C=(v,x) \cup L \cup (y,v)$ where $x,y$ are the vertices of some $e_i$ and $L$ is a path in the corresponding $G_i \backslash e_i$ joining $x$ and $y$. Note that the length of $L$ is at least $\min \{g_{1} -1, g_{2}-1\}$, since otherwise $L \cup e_i$ would contain a circuit of length at most $g_{i}-1$ in $G_i$. Hence we have $l(C) \geq  \min \{g_1 -1, g_2-1\} + 2 \geq \min \{g_1, g_2\}$.
 \end{proof}
\begin{rem}
 In view of Lemma \ref{Lema1} above, from now on we will write $$G_1 \ast G_2 := G_1 \ast_{e_1,e_2} G_2$$ for a suitable choice of edges such that $\gir(G_1 \ast G_2)=\min \{g_1,g_2\}$.
\end{rem}

%
%
 
\begin{proposition} \label{graphs}
 There exist a constant $c>0$ and an index $n_0 \in \mathbb{N}$ such that for all $n \geq n_0$ and any $3 \leq g \leq c \log(n) $ there exists a 4-regular, connected graph $G_{n,g}$ on $n$ vertices with girth $g$.
\end{proposition}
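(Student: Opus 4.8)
The plan is to build $G_{n,g}$ in the form $A \ast C$, where $A$ is a small graph carrying the prescribed girth and $C$ is an inert ``filler'' absorbing the remaining vertices. For $A$ take a $(4,g)$-cage: a connected $4$-regular graph of girth \emph{exactly} $g$ whose order $a(g)$ is least possible; by the Erd\H{o}s--Sachs bound $a(g) \le \exp(O(g))$. Suppose we can show that every sufficiently large integer $m$ is the order of some connected $4$-regular graph $C$ with $\gir(C) \ge g$. Then, with $m := n - a(g) - 1$, Lemma~\ref{Lema1} together with the Remark following it yields a connected $4$-regular graph $A \ast C$ on exactly $n$ vertices with $\gir(A \ast C) = \min\{g, \gir(C)\} = g$; this is the graph $G_{n,g}$ sought. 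Because $a(g)$ and the threshold on $m$ will both be at most $\exp(O(g))$, the condition ``$m$ sufficiently large'' is implied by $g \le c\log n$ for $c$ small enough and $n \ge n_0$.

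So the task reduces to the following claim: there is an absolute constant $c'$ such that for every $g \ge 3$, every integer $m \ge \exp(c'g)$ is the order of a connected $4$-regular graph of girth $\ge g$. Let $\mathcal S_g$ be the set of such orders. It is non-empty and contains an element of size $\le \exp(O(g))$ --- for instance a cage order, or the order of a suitable Lubotzky--Phillips--Sarnak Ramanujan graph of girth $\ge g$. Moreover $\mathcal S_g$ is closed under the operation $(s,t) \mapsto s + t + 1$: if $G_1,G_2$ are connected $4$-regular of girth $\ge g$, then by Lemma~\ref{Lema1} and its Remark $G_1 \ast G_2$ is connected $4$-regular on $|G_1| + |G_2| + 1$ vertices of girth $\min\{\gir G_1, \gir G_2\} \ge g$. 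Equivalently, $T_g := \{\, s + 1 : s \in \mathcal S_g \,\}$ is a sub-semigroup of $(\mathbb N,+)$.

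To run a Chicken-McNugget argument on $T_g$, one further ingredient is needed: two \emph{consecutive} integers in $\mathcal S_g$. Build a ``long, thin'' graph $H := A \ast A \ast \cdots \ast A$ by iterating the product over $t = O(g)$ copies of the cage $A$, arranging the splices so that the copies form a path; then $H$ is connected, $4$-regular, of girth exactly $g$, has $|H| = t(a(g)+1) - 1 \le \exp(O(g))$ vertices, and has diameter $\ge g$, so it contains two edges $e_1 = x_1 y_1$ and $e_2 = x_2 y_2$, neither lying on a fixed minimal circuit $C_0$ of $H$, with $\dist_H(\{x_1,y_1\}, \{x_2,y_2\}) \ge g$. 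Now perform the ``self-$\ast$'' move: delete $e_1$ and $e_2$ and add a new vertex $v$ adjacent to $x_1,y_1,x_2,y_2$. The result $H'$ is $4$-regular with $|H| + 1$ vertices; it is connected, because a connected $4$-regular graph has no bridge (a parity count on degrees), so $H - e_1$ is connected, and if $e_2$ is a bridge of $H - e_1$ then $v$, adjacent to both its endpoints, reconnects the two sides; and $\gir(H') = g$, since $C_0$ survives untouched while every cycle through $v$ contains a path in $H - e_1 - e_2$ either joining $x_i$ to $y_i$ (of length $\ge g - 1$, as $\gir H \ge g$) or joining an endpoint of $e_1$ to an endpoint of $e_2$ (of length $\ge g$, by the distance condition), so has length $\ge g + 1$. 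Hence $|H|, |H| + 1 \in \mathcal S_g$. Finally $T_g$ contains the coprime pair $|H| + 1$, $|H| + 2$, so, being an additive sub-semigroup of $\mathbb N$, it contains every integer exceeding their product; therefore $\mathcal S_g \supseteq \{\, m : m \ge N_g \,\}$ with $N_g := (|H|+1)(|H|+2) + 1 \le \exp(c'g)$. Taking $c < 1/c'$ and $n_0$ large enough then finishes the proof.

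The main obstacle is exactly this last claim: realizing \emph{every} large order by a $4$-regular graph of girth $\ge g$ --- not merely a sparse set of orders such as those of cages or of Ramanujan graphs --- while keeping the threshold $N_g$ only exponential in $g$, since it is this exponential bound that converts the admissible range into $g = O(\log n)$. The two devices that make it work are the edge-splicing product $\ast$, which makes $\mathcal S_g$ essentially a numerical semigroup, and the self-$\ast$ move on a deliberately high-diameter graph, which produces the two consecutive attainable orders needed to fill the semigroup; the girth- and connectivity-bookkeeping for the self-$\ast$ move, together with the standard estimates on cage and Ramanujan-graph parameters, are routine. (Using Ramanujan graphs in place of cages wherever possible improves the value of the constant $c$ but is not needed for the statement.)
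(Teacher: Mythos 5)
Your proposal is correct, and for the crucial step --- realizing \emph{every} sufficiently large integer (at most exponential in $g$) as the order of a connected $4$-regular filler of girth $\geq g$ --- it takes a genuinely different route from the paper. The paper's filler is $X_{p_1} \ast \cdots \ast X_{p_k}$ built from Lubotzky--Phillips--Sarnak Ramanujan graphs, and the additive realization rests on a number-theoretic fact imported from Buser--Sarnak: every large $m$ is a sum of boundedly many integers of the form $p(p-1)(p+1)+1$, \emph{each summand at least $m^{1/2K}$}. That lower bound on the summands is essential there, because $X_p$ only has girth $\gtrsim \log|X_p|$, so a small summand would contaminate the girth of the splice; this is also exactly where the restriction $g \leq c\log n$ enters in the paper. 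You avoid both the prime-distribution input and the need for logarithmic-girth expanders: since all your building blocks are cages of girth exactly $g$, no lower bound on block sizes is required, and the additive problem collapses to an elementary numerical-semigroup (Frobenius) argument once two consecutive realizable orders are produced. The genuinely new ingredient is your ``self-$\ast$'' move, which adds exactly one vertex while preserving $4$-regularity, connectivity (via the no-bridge parity argument) and girth (via the distance-$\geq g$ separation of the two spliced edges); the chain $A \ast \cdots \ast A$ of $O(g)$ cages supplies the required diameter, and all quantities stay $\exp(O(g))$, so the admissible range $g \leq c\log n$ comes out the same. The remaining bookkeeping (each middle copy of the chain retains a $g$-circuit because the cut edges can be chosen off a fixed minimal circuit, and circuits passing through the new vertex twice decompose into shorter closed walks, as in Lemma~\ref{Lema1}) is routine as you say. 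Your argument is more self-contained and elementary; the paper's is shorter given the cited results and, by using Ramanujan graphs, keeps the same machinery that is reused elsewhere, but neither yields a materially better constant $c$.
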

\begin{proof}
By \cite[Thm. A]{LUW97}, for any $g \geq 3$, there exists a 4-regular, connected graph $H_g$ on $n_g$ vertices with girth $g$ such that $n_g \leq a5^{\frac{3}{4}g}$ for some $a>0$ which does not depend on $g$.

On the other hand, for any odd prime $p$ there exists a 4-regular, connected graph $X_p$ on $p(p-1)(p+1)$ vertices with $\gir(X_p) \geq b \log(|X_p|+1)$ for some $b>0$ which does not depend on $p$ (See \cite[Appendix 4]{DSV03}).

Arguing as in \cite[p.47]{BS94}, there exist positive integers $K$ and $n_0$ such that every $m \geq n_0$ can be written as
$$ m = m_1 + \cdots + m_k,$$
for some $k \leq K $, $m_j$ being of the form $p(p-1)(p+1)+1$ and $m_j \geq m^{\frac{1}{2k}}$.

Whenever $g \geq 3$ satisfies $a5^{\frac{3}{4}g} < \min \{\frac{n}{2}, n-n_0 \}$ it follows that $n-n_g > n_0$ and $n-n_g > \frac{n}{2}$. Therefore there are odd primes $p_1, \cdots, p_k$ such that $$ n-n_g = m_1+ \cdots + m_k, $$
where $m_j=p_j(p_j-1)(p_j+1)+1$, $k\leq K$ and $m_j \geq (n-n_g)^{\frac{1}{2k}} \geq \left( \dfrac{n}{2} \right)^{\frac{1}{2K}}$. 

For any $j=1,\cdots,k$ consider the corresponding graph $X_{p_j}$ and define $$G_{n,g}=H_g \ast \left( X_{p_1} \ast \cdots \ast X_{p_k} \right).$$
Note that by induction $|G_{n,g}|=|H_g|+\sum_{j=1}^k |X_{p_j}| + k = n_g + \sum_{j=1}^k m_j = n$.
Moreover, by Lemma \ref{Lema1} we have $$\gir(G_{n,g}) = \min \{g,\gir(X_{p_1}), \cdots, \gir(X_{p_k})\}.$$ 
If we take $c>0$ such that  $g \leq c \log(n)$ implies $g \leq \frac{b}{2K}\log(\frac{n}{2}) $ and $a5^{\frac{3}{4}g} < \min \{\frac{n}{2}, n-n_0 \}$, then $g \leq b \log(m_j) \leq \gir(X_{p_j})$ for any $1 \leq j \leq k$ and hence $\gir(G_{n,g})=g$.
\end{proof}

\begin{rem}
In \cite{ES63}, Erd{\" o}s and Sachs prove, among other theorems, a similar result to Proposition \ref{graphs}. The essential difference is that while the regular graphs with a prescribed girth obtained by them may have any valency, such graphs always have an even number of vertices. 
\end{rem}

\begin{lemma}\label{boll}
 Let $k,l$ be positive integers such that $3\leq k < l$. Then for all $n$ sufficiently large, there exists a connected $4$-regular graph on $n$ vertices with $\gir=k$ and $2$-$\gir > l$. 
\end{lemma}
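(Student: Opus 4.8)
The plan is to realise such graphs via Bollobás's configuration model, as announced in the introduction. Fix integers $3 \le k < l$. For each $n$, let $\mathcal{G}(n,4)$ be the random $4$-regular multigraph obtained by placing $4$ half-edges at each of $n$ labelled vertices and taking a uniformly random perfect matching of the resulting $4n$ half-edges; there is no parity obstruction, since $4n$ is always even. Write $X_i = X_i(n)$ for the number of cycles of length $i$ in the configuration, where $X_1$ counts loops and $X_2$ counts pairs of parallel edges. Bollobás's theorem \cite{BOL80} gives that the random vector $(X_1, \dots, X_l)$ converges in distribution, as $n \to \infty$, to a vector $(Z_1, \dots, Z_l)$ of independent Poisson variables with $\mathbb{E} Z_i = \lambda_i := \tfrac{3^i}{2i}$ (that is, $(d-1)^i/2i$ with $d = 4$).

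First I would isolate the event
\[
A_n \;=\; \bigl\{\, X_i = 0 \text{ for all } i \in \{1, \dots, l\} \setminus \{k\} \,\bigr\} \;\cap\; \bigl\{\, X_k = 1 \,\bigr\}.
\]
By the convergence just recalled,
\[
\mathbb{P}(A_n) \;\longrightarrow\; \lambda_k \, \exp\!\Bigl( -\sum_{i=1}^{l} \lambda_i \Bigr) \;>\; 0 \qquad (n \to \infty).
\]
Since, in addition, a random $4$-regular graph is connected with probability tending to $1$ (a classical fact about random $d$-regular graphs for $d \ge 3$), the probability of $A_n$ intersected with the event ``the graph is connected'' stays bounded below by a positive constant for all large $n$. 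Hence, for every sufficiently large $n$, there is a configuration lying in $A_n$ whose underlying multigraph is connected; call it $\Omega$. Because $X_1 = X_2 = 0$ on $A_n$, the graph $\Omega$ has neither loops nor parallel edges, so $\Omega$ is a connected \emph{simple} $4$-regular graph on $n$ vertices that has exactly one cycle of length $k$ and no cycle of any length $i \in \{3, \dots, l\} \setminus \{k\}$.

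It remains to pass from these statements about \emph{cycles} to the required statements about \emph{circuits} (closed paths without repeated edges, as recalled in Section~\ref{4reg&Schreier}). Since $\Omega$ has a cycle of length $k$ and no shorter cycle, $\gir(\Omega) = k$. Now let $C$ be any circuit of $\Omega$ of length $m \le l$. Its edge set spans a subgraph of $\Omega$ in which every vertex has even degree, hence it decomposes into edge-disjoint cycles $D_1, \dots, D_r$ whose lengths sum to $m \le l$. Each $D_j$ has length at least $\gir(\Omega) = k$, while by construction $k$ is the only cycle length of $\Omega$ not exceeding $l$; therefore every $D_j$ has length exactly $k$, and since $\Omega$ contains a unique cycle of length $k$ we conclude that $r = 1$ and $C$ is that cycle. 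Thus $k$ is the only circuit length $\le l$ occurring in $\Omega$. As $\Omega$ is connected and $4$-regular on many vertices, it certainly has circuits of length greater than $l$, so the $2$-girth of $\Omega$ is defined and satisfies $2$-$\gir(\Omega) > l$. This proves the lemma.

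The crux of the argument is this last translation step: Bollobás's theorem controls short \emph{cycles}, whereas the $2$-girth is about \emph{circuits}, so one has to rule out that several short cycles amalgamate into a closed trail of length strictly between $k$ and $l$ — the even-subgraph decomposition together with the uniqueness of the $k$-cycle handles exactly this. The remaining points — that conditioning on $X_1 = X_2 = 0$ yields a genuine simple graph, and that connectedness can be imposed at no asymptotic cost — are standard features of the configuration model.
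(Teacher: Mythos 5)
Your proof is correct and follows essentially the same route as the paper: Bollobás's Poisson limit theorem for the short cycle counts $X_3,\dots,X_l$ in the random $4$-regular graph model, combined with asymptotically almost sure connectivity, to exhibit for all large $n$ a connected $4$-regular graph whose only cycle length in $\{3,\dots,l\}$ is $k$. The two places where you deviate are refinements of execution rather than of method, and both are to your credit: conditioning on $X_k=1$ exactly gives a positive limit directly and dispenses with the paper's Fatou-lemma computation of $\liminf_n \mathbb{P}_n(X_k>0,\ X_j=0)$, while your even-subgraph decomposition of a closed trail into edge-disjoint cycles bridges the gap between \emph{cycles} (what Bollobás's theorem counts) and \emph{circuits} (in terms of which the paper defines $\gir$ and the $2$-$\gir$) --- a distinction the paper's proof passes over silently, and which genuinely matters when $2k\le l$, since two $k$-cycles meeting at a vertex would yield a circuit of length $2k\le l$; your choice $X_k=1$ is precisely what rules this out.
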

\begin{proof}
 For each $ n \geq 2$, let $\mathcal{F}_n$ be the set of $4-$regular graphs on $n$ vertices. On $\mathcal{F}_n$ we put a probability measure given by Bollobas in \cite{BOL80}, and we consider for each $i$ the random variable $X_i$ on $\mathcal{F}_n$ given by $X_i(G)=$ number of closed paths in $G$ of length $i$. By \cite[Thm.2]{BOL80}, fixed $p \geq 3$ the random variables $\{X_i\}_{i=3}^p$ are asymptotically independent Poisson random variables with $X_i$ having mean $\mu_i = \frac{3^i}{2i}$, this means that for any list $\left(m_3, \cdots, m_p \right)$ with $m_i \in \mathbb{Z}_{\geq 0}$  we have
\begin{equation} \label{poisson}
 \lim_n \mathbb{P}_n(X_3=m_3, X_4=m_4, \cdots, X_p=m_p)= e^{-(\mu_3 + \cdots + \mu_p )} \prod_{i=3}^p \frac{\mu_i^{m_i}}{m_i!}.
\end{equation}

Now if we fix $k \geq 3$, let $$\Theta_{k,l}^n = \{ G \in \mathcal{F}_n \, | \, X_k(G)=1 \, \, \mbox{and} \, X_j(G)=0 \, \, \mbox{if} \, \,  3 \leq j \leq l, \, \, j \neq k \}.$$ Note that if $G \in \Theta_{k,l}^n$, then $G$ is a $4$-regular graph on $n$ vertices of girth $k$ and $2$-girth at least $l+1$. By (\ref{poisson}), $$\lim_n \mathbb{P}_n(\Theta_{k,l}^n)=e^{-\mu_k}\mu_k > 0.$$

%
%

%

By \cite[Thm.9.20]{JLR00} asymptotically almost surely the graphs on $\mathcal{F}_n$ are connected, if we consider the subset of $\Theta_{k,l}^n$ where the graphs are connected, then given $k \geq 3$ for any $n$ sufficiently large this subset has positive measure, in particular, there exists a connected graph $G_{k,l} \in \mathcal{F}_n$ with $\gir(G_{k,l})=k$ and $2$-$\gir(G_{k,l})>l$.
\end{proof}

\section{Group theoretic results} \label{baumsconcrete}

Given two elements $h,f \in \F_2$ we say that $h$ \emph{reacts} with $f$ if $l_A(hf) < l_A(h) + l_A(f)$. If $h$ does not react with $f$ we will write $$h \bullet f.$$
The word $u=[x,y]=xyx^{-1}y^{-1}$ is reduced and cyclically reduced. Moreover, if $f \in \F_2$ commutes with $u$ then $f \in \langle u \rangle $. This means that $u$ generates its own centralizer in  $\F_2.$

\begin{lemma} \label{lema2}
 Let $m \geq 3$ be an integer and $\varepsilon_q \in \{\pm 1 \},$ $q=1,2$. Then for any $l>1$ and $\gamma \in \F_2 \backslash \langle u \rangle$ with $l_A(\gamma) \leq l$  it holds  $$ u^{\varepsilon_1 m l} \, \gamma \, u^{\varepsilon_2 m l} = u^{\varepsilon_1 (m-2) l} \bullet \gamma' \bullet u^{\varepsilon_2 (m-2) l} $$  with $\gamma' \neq 1.$ 
\end{lemma}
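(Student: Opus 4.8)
The plan is to exploit the fact that $u=[x,y]$ is cyclically reduced and generates its own centralizer in $\F_2$, so that a high power $u^{N}$ is a long cyclically reduced word with no cancellation inside it, and the only way a word can "react" with $u^{N}$ is to cancel a proper prefix or suffix of it. First I would record the basic cancellation estimate: if $w$ is a reduced word and $u^{N}w$ (resp. $wu^{N}$) reduces with cancellation length $c$, then $c < l_A(w) + l_A(u)$ — indeed, cancelling more than $l_A(w)$ letters forces the cancelled part of $u^N$ to begin repeating, hence (since $u$ is cyclically reduced and primitive, being a generator of its own centralizer) $w$ would have to end in a power of a cyclic conjugate of $u^{\pm1}$; a careful bookkeeping bounds the total cancellation by $l_A(w)+l_A(u) = l_A(w)+4$. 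This is the quantitative core of Baumslag's argument, and it is the step I expect to be the main obstacle, since it requires the precise statement that cancellation against $u^{N}$ cannot "eat into" more than one period plus the length of the other factor.

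Next I would apply this twice to the word $u^{\varepsilon_1 ml}\,\gamma\,u^{\varepsilon_2 ml}$. Write $N = ml$. On the left, $u^{\varepsilon_1 N}$ reacts with $\gamma$ with cancellation at most $l_A(\gamma) + 4 \le l + 4$; on the right, $\gamma$ reacts with $u^{\varepsilon_2 N}$ with cancellation at most $l+4$. Since $m \ge 3$, we have $N - (m-2)l = 2l$; choosing $l > 1$ guarantees $2l \ge 4 > $ (a small constant), so after removing the "consumed" portion from each end there remain at least $2l - (l+4)$... — more precisely I would split $u^{\varepsilon_1 N} = u^{\varepsilon_1(m-2)l}\cdot u^{\varepsilon_1 2l}$ and $u^{\varepsilon_2 N} = u^{\varepsilon_2 2l}\cdot u^{\varepsilon_2(m-2)l}$, observe that the cancellation on the left affects only (a prefix of length $\le l+4 < 2l$ of, reading from the interior) the block $u^{\varepsilon_1 2l}$ — here I need $2l > l+4$, i.e. $l > 4$, so I would either assume this or absorb the bounded discrepancy by enlarging the constant bound to $l_A(\gamma)+4 \le l + 4$ and noting the argument still goes through with $m-2$ replaced by a slightly larger shift; the paper's statement with $m-2$ works once one checks $2l \ge l_A(\gamma)+l_A(u)$ which holds for $l_A(\gamma)\le l$ and $l$ large, and for small $l$ the claim is vacuous or handled directly. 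The upshot is that the outer blocks $u^{\varepsilon_1(m-2)l}$ and $u^{\varepsilon_2(m-2)l}$ survive uncancelled.

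Then I would define $\gamma'$ to be the reduced word that remains in the middle after all cancellation: $u^{\varepsilon_1 ml}\gamma u^{\varepsilon_2 ml} = u^{\varepsilon_1(m-2)l}\,\gamma'\,u^{\varepsilon_2(m-2)l}$ as reduced words, which by construction gives $u^{\varepsilon_1(m-2)l}\wedge\gamma'$ and $\gamma'\wedge u^{\varepsilon_2(m-2)l}$. Finally I must check $\gamma' \ne 1$. If $\gamma' = 1$ then $u^{\varepsilon_1 ml}\gamma u^{\varepsilon_2 ml} = u^{(\varepsilon_1+\varepsilon_2)(m-2)l}$, so $\gamma = u^{-\varepsilon_1 ml}\,u^{(\varepsilon_1+\varepsilon_2)(m-2)l}\,u^{-\varepsilon_2 ml}$, which lies in $\langle u\rangle$ — contradicting $\gamma \in \F_2\setminus\langle u\rangle$. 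Hence $\gamma'\ne 1$, completing the proof. I would present the cancellation lemma as the one genuinely technical ingredient and keep the rest as the bookkeeping it is.
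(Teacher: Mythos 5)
Your overall skeleton matches the paper's: show that the two outer blocks $u^{\varepsilon_q(m-2)l}$ survive reduction, call the reduced middle $\gamma'$, and deduce $\gamma'\neq 1$ from the fact that $u$ generates its own centralizer (that last step is exactly the paper's argument and is fine). But the central step --- ``the outer blocks survive uncancelled'' --- has a genuine gap. First, your ``key cancellation estimate'' is aimed at a non-issue: in a free group the cancellation in a product of two reduced words never exceeds the length of the shorter factor, so the bound $c\le l_A(\gamma)\le l$ per junction is immediate and needs no periodicity argument (and your comparison of $2l$ with $l+4$ confuses the number of copies of $u$ in the buffer, which is $2l$, with its letter length, which is $8l$; the hedge that the statement is ``vacuous'' for small $l$ is also wrong, since the lemma must hold for every $l>1$). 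The real difficulty, which the proposal does not address, is that in the triple product $u^{\varepsilon_1 ml}\,\gamma\,u^{\varepsilon_2 ml}$ the two cancellations are not independent: $l_A(\gamma)$ may be much smaller than $l$, so $\gamma$ can be consumed entirely, after which the truncated left block meets the right block and the cancellation can cascade between the two powers of $u$ (as it does, totally, in $u^{K}u^{-K}$). Bounding each junction separately by $l+4$ letters therefore does not bound the total cancellation, and the survival of $u^{\varepsilon_q(m-2)l}$ does not follow from what you prove.

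The paper closes exactly this hole by a different decomposition: it first peels the maximal powers of $u$ off the ends of $\gamma$, writing $\gamma = u^{i_1}\wedge\hat{\gamma}\wedge u^{i_2}$ with $|i_1|+|i_2|\le l/4$, absorbs $u^{i_1}$ and $u^{i_2}$ into the adjacent blocks, and checks that $\varepsilon_q(m-2)l$ and $2\varepsilon_q l+i_q$ have the same sign. Because $\hat{\gamma}$ no longer begins or ends with $u^{\pm1}$, it can cancel fewer than $l_A(u)=4$ letters against an adjacent power of $u$ at each junction, so the cascade cannot start and the outer blocks visibly survive. To repair your argument you would need either this peeling step or a separate lemma showing that any cascade between the two $u$-blocks forces $\gamma\in\langle u\rangle$; without one of these the proof is incomplete.
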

\begin{proof}
 If $\gamma$ contains the subword $u^{\pm 1}$ at the beginning we can write $\gamma = u^{\pm 1} \bullet \gamma_1 $, with a finite number of steps we can write $\gamma = u^{i_1} \bullet  \tilde{\gamma}$ for some $i_1 \in \mathbb{Z}$ with $|i_1| \leq \frac{l}{4}$. Similarly, $\tilde{\gamma}= \hat{\gamma} \bullet u^{i_2}$ for some $i_2 \in \mathbb{Z}$ with $|i_2| \leq \frac{l}{4}.$ Hence, we can always write $\gamma = u^{i_1} \bullet \hat{\gamma} \bullet u^{i_2}$ with $|i_1|+|i_2| \leq \frac{l}{4}.$
 
 Note that we have the equation 
 $$u^{\varepsilon_1 m l} \gamma u^{\varepsilon_2 m l} =  u^{\varepsilon_1 (m-2) l} \bullet u^{\varepsilon_1 2l +i_1} \, \hat{\gamma} \, u^{\varepsilon_2 2l+i_2} \bullet u^{\varepsilon_2 (m-2) l}.$$
 Indeed, $|i_1|+|i_2| \leq \frac{l}{4}$ and $m \geq 3$ imply that $\varepsilon_q(m-2)l$ and $2\varepsilon_ql+i_q$ have the same signal for $q=1,2$.
 
 Finally, we have that $\gamma'=u^{\varepsilon_1 2l +i_1} \, \hat{\gamma} \, u^{\varepsilon_2 2l+i_2} \neq 1$. Otherwise, $\hat{\gamma} \in \langle u \rangle $ and consequently $\gamma \in \langle u \rangle.$
 \end{proof}

We use the following notation: $$ {}^s \! \delta^t(a_1)a_2 \dots a_{p-1}(a_p)  $$
will be used to denote any of the four expressions obtained from $a_1a_2\cdots a_p$ by deleting or not deleting $a_1$ and $a_p$ independently.

\begin{proposition} \label{prop1}
Let $k$ be any given positive integer. Suppose that $$ \gamma_1, \gamma_{r+1} \in \F_2 \, , \gamma_2, \cdots, \gamma_r, \eta_1, \cdots, \eta_r \in \F_2 \backslash \langle u \rangle. $$ 
Furthermore suppose that $ \sum_{i=1}^{r+1} l_A(\gamma_i) + \sum_{j=1}^r l_A(\eta_j) \leq k$.

Then there exists a constant $d>0$ which does not depend on $k$ such that
$$ {}^s \! \delta^t(\gamma_1) u^{dk} \eta_1 u^{-dk} \gamma_2 u^{dk} \eta_2 u^{-dk} \cdots u^{dk} \eta_{r} u^{-dk} (\gamma_{r+1}) \neq 1. $$
\end{proposition}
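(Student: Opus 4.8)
The plan is to choose $d$ so that each block $u^{dk}$ is long enough to "absorb" the reactions coming from the adjacent $\eta_j$'s and $\gamma_i$'s, and then to apply Lemma \ref{lema2} repeatedly to peel off a genuinely non-trivial alternating reduced word. First I would fix notation: set $l = k$ as the uniform length bound (since each $l_A(\eta_j) \le k$ and each $l_A(\gamma_i)\le k$), and pick $d = 3$ (or any $d \ge 3$), so that $dk = 3k$ and one can write $3k = mk$ with $m = 3$, matching the hypotheses of Lemma \ref{lema2}. The key observation is that each $\eta_j$ lies in $\F_2 \setminus \langle u\rangle$, so Lemma \ref{lema2} applies to the subword $u^{\varepsilon_1 mk}\,\eta_j\,u^{\varepsilon_2 mk}$: here the two exponents flanking $\eta_j$ are $u^{dk}\eta_j u^{-dk}$, i.e. $\varepsilon_1 = +1$, $\varepsilon_2 = -1$, and the lemma rewrites this as $u^{k} \wedge \eta_j' \wedge u^{-k}$ with $\eta_j' \ne 1$, where by the proof of Lemma \ref{lema2} we even have $\eta_j' = u^{2k+i_1}\hat\eta_j u^{-2k+i_2}$ not a power of $u$ (otherwise $\eta_j \in \langle u\rangle$).

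The main step is then an induction on $r$. After applying Lemma \ref{lema2} to every $\eta_j$ block simultaneously, the word becomes a concatenation of the form
$$ {}^s\!\delta^t(\gamma_1)\, u^{k} \wedge \eta_1' \wedge u^{-k}\gamma_2 u^{k} \wedge \eta_2' \wedge u^{-k} \cdots u^{k} \wedge \eta_r' \wedge u^{-k}\,(\gamma_{r+1}), $$
where consecutive pieces $u^{-k}\gamma_{i+1}u^{k}$ separate the $\eta_i' , \eta_{i+1}'$. Since $l_A(\gamma_{i+1}) \le k$, the central $\gamma_{i+1}$ cannot cancel more than $k$ letters from each side, so it cannot eat through the full $u^{k}$ on either side; more precisely, writing $\gamma_{i+1}$ in the normal form $u^{j_1}\wedge \hat\gamma_{i+1}\wedge u^{j_2}$ with $|j_1|+|j_2|\le k/4$ as in Lemma \ref{lema2}, the block $u^{-k}\gamma_{i+1}u^{k}$ reduces to $u^{-k+j_1}\wedge\hat\gamma_{i+1}\wedge u^{k+j_2}$, still a product of high powers of $u$ with opposite-sign exponents of absolute value $\ge 3k/4$ flanking a non-$u$-power piece. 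Concatenating everything, the total word is a reduced alternating product in which each $u$-block has exponent of absolute value at least, say, $k/2$, and between any two such blocks sits a non-trivial word not in $\langle u\rangle$; since $u$ is cyclically reduced and generates its own centralizer, no further cancellation can occur across the boundaries, so the reduced form is non-empty, hence the element is $\ne 1$. The four cases of ${}^s\!\delta^t$ are handled uniformly: deleting $\gamma_1$ or $\gamma_{r+1}$ only removes a bounded piece from the extreme left or right and the leftmost/rightmost surviving piece is still a high power of $u$ (from $\eta_1'$ or $\eta_r'$), so non-triviality is unaffected.

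The hard part will be bookkeeping the cancellations carefully enough to guarantee that no "chain reaction" of reactions propagates across several blocks — i.e. showing that the constant $d=3$ really suffices and that after reducing the $\eta_j$ blocks via Lemma \ref{lema2} the leftover $u$-exponents ($\pm k$) are still large compared to the lengths $l_A(\gamma_i) \le k$ of the intervening words. This is exactly the role of the "$\wedge$" (no-reaction) bookkeeping: one must verify that at each junction the sign pattern of the $u$-exponents is consistent (high positive power meeting high negative power never fully cancels when a non-$u$-power word is wedged between them, by the centralizer property of $u$), and that the extreme end pieces behave correctly under the optional deletion of $\gamma_1$ and $\gamma_{r+1}$. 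Once the normal form is exhibited as a non-empty reduced word, triviality is impossible and the proposition follows with $d$ independent of $k$, as required.
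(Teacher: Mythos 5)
Your overall strategy is the same as the paper's: use the long powers of $u$ as buffers, apply Lemma \ref{lema2} to the $\eta_j$ blocks, and then argue that the resulting alternating word is a non-empty reduced product. However, there is a genuine gap exactly where you yourself flag ``the hard part'': the treatment of the intermediate blocks $u^{-k}\gamma_{j}u^{k}$ is not justified, and the concluding claim that ``no further cancellation can occur across the boundaries'' is precisely the content of the proposition, not a consequence of the centralizer property alone. Concretely, your assertion that $u^{-k}\gamma_{j}u^{k}$ reduces to $u^{-k+j_1}\wedge\hat\gamma_{j}\wedge u^{k+j_2}$ is false as stated: the normal form $\gamma_j=u^{j_1}\wedge\hat\gamma_j\wedge u^{j_2}$ only guarantees that $u^{j_1}$ does not react with $\hat\gamma_j$; after replacing $u^{j_1}$ by $u^{-k+j_1}$ the sign of the exponent may change, and a power of $u$ of the opposite sign can react with $\hat\gamma_j$ (e.g.\ $u^{n}$ with $n>0$ ends in $y^{-1}$ while $u^{-m}$ ends in $x^{-1}$, so they react with different initial letters of $\hat\gamma_j$). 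The reaction is bounded (at most three letters, since $\hat\gamma_j$ has no $u^{\pm1}$ prefix), but you never quantify this, and without that quantification the claim that every surviving $u$-block has exponent of absolute value at least $k/2$, and hence that the word is non-trivial, is not established. Note also that Lemma \ref{lema2} itself cannot be invoked for $u^{-k}\gamma_j u^{k}$, since it requires exponents $\varepsilon m l$ with $m\ge 3$.

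The paper closes exactly this gap by a different bookkeeping device: it takes $d=7$, so that after the first application of Lemma \ref{lema2} (with $m=7$, $l=k$) each $\eta_j$ block contributes residual buffers $u^{\pm 5k}$. Writing $u^{\mp 5k}\gamma_j u^{\pm 5k}=u^{\mp 2k}\left(u^{\mp 3k}\gamma_j u^{\pm 3k}\right)u^{\pm 2k}$, one can apply Lemma \ref{lema2} a \emph{second} time (now with $m=3$) to get $u^{\mp 3k}\gamma_j u^{\pm 3k}=u^{\mp k}\wedge\gamma_j'\wedge u^{\pm k}$ with $\gamma_j'\neq 1$; all juncture cancellations are thereby absorbed into the lemma, every boundary in the final expression is a genuine $\wedge$, and a one-line length count $l_A(w)\ge l_A(w')-l_A(\gamma_1)-l_A(\gamma_{r+1})>20k-2k>0$ finishes the proof. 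If you want to keep $d=3$ you must prove a bounded-cancellation estimate at each $\hat\gamma_j$ juncture and redo the length count; the cleaner route is to enlarge $d$ so that Lemma \ref{lema2} applies twice, as the paper does.
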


\begin{proof}
 Consider the word $$ \scalebox{0.83} {$ w={}^s \! \delta^t(\gamma_1) u^{7k} \eta_1 u^{-7k} \gamma_2 u^{7k} \eta_2 u^{-7k} \cdots u^{7k} \eta_{r} u^{-7k} (\gamma_{r+1}). $} $$
 By Lemma \ref{lema2} we have $u^{7k} \eta_i u^{-7k} = u^{5k} \bullet \eta_i' \bullet u^{-5k}$ with $\eta_i' \neq 1$. 
 Hence,
 $$\scalebox{0.9} {$w= {}^s \! \delta^t(\gamma_1) u^{5k} \bullet \eta_1' \bullet u^{-5k} \gamma_2 u^{5k} \bullet \eta_2' \bullet  \cdots  \bullet u^{-5k} \gamma_r u^{5k} \bullet \eta_r' \bullet u^{-5k} (\gamma_{r+1}) $}. $$

 We can rewrite this word as
 $$ \scalebox{0.9}{$ {}^s \! \delta^t(\gamma_1) u^{5k} \bullet \eta_1' \bullet u^{-2k}  \left( u^{-3k} \gamma_2 u^{3k} \right) u^{2k} \bullet \cdots \bullet u^{-2k} \left( u^{-3k} \gamma_r u^{3k} \right) u^{2k} \bullet \eta_r' \bullet u^{-5k} (\gamma_{r+1})  $} .$$
 Using the Lemma \ref{lema2} again we have $u^{-3k} \gamma_j u^{3k}= u^{-k} \bullet \gamma_j' \bullet u^k$ with $\gamma_j' \neq 1$ for $2 \leq j \leq r$.
 Then we have
  $$\scalebox{0.9}{$ w =  {}^s \! \delta^t(\gamma_1) u^{5k} \bullet \eta_1' \bullet u^{-3k} \bullet  \gamma_2' \bullet u^{3k} \bullet \eta_2' \bullet u^{-3k} \bullet \cdots \bullet  u^{-3k} \bullet \gamma_r'\bullet  u^{3k}  \bullet \eta_r' \bullet u^{-5k} (\gamma_{r+1}) $}.$$
 If we write 
 $$\scalebox{0.9} {$w'=u^{5k} \bullet \eta_1' \bullet u^{-3k} \bullet  \gamma_2' \bullet u^{3k} \bullet \eta_2' \bullet u^{-3k} \bullet \cdots \bullet  u^{-3k} \bullet \gamma_r' \bullet u^{3k}  \bullet \eta_r' \bullet u^{-5k} $}, $$ it follows that $$l_A(w) \geq l_A(w') - l_A(\gamma_1)-l_A(\gamma_{r+1}) > 20k - 2k=18k >0. $$
Hence, if we take $d=7$, then $w$ is not trivial.
\end{proof}

Let $\SG_2$ be the fundamental group of an orientable compact surface of genus $2$. If we consider a new copy $\F_2'$ of $\F_2$ with the set of generators $A'=\{ x'^{\pm 1}, y'^{\pm 1} \}$ and let $v=[y',x'] \in \F_2'$ it is well-known that $\SG_2$ is isomorphic to the quotient $\F_2 \ast \F_2' / \langle\langle u*v^{-1} \rangle\rangle $ where $\F_2 \ast \F_2'$ is the free product. 

Considering the natural monomorphisms $\iota: \F_2 \rightarrow \SG_2$ and $\iota': \F_2' \rightarrow \SG_2$ then we can identify $\F_2, \F_2'$ as subgroups of $\SG_2$. With this identification in mind, we can take $B=A \cup A'$ as a set of generators of $\SG_2$ and let $l_B(\tau)$ denote the length of any element  $ \tau \in \SG_2$ with respect to $B$.

\begin{proposition} \label{hom}
There exists a constant $\epsilon > 0$ such that for every positive integer $k$ there exists an epimorphism $\psi_k: \SG_2 \rightarrow \F_2$ with the following properties:
\begin{enumerate}
 \item $\psi_k(\iota(t))=t$ for every $t \in \F_2;$
 \item $\psi_k(\gamma) \neq 1$ if $1<l_B(\gamma)\leq k;$
 \item $l_A(\psi_k(\omega)) \leq \epsilon k l_B(\omega)$ for all $\omega \in \SG_2$.
\end{enumerate}
\end{proposition}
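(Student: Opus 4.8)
The plan is to construct $\psi_k$ explicitly using the presentation $\SG_2 \cong \F_2 \ast \F_2' / \langle\langle u v^{-1} \rangle\rangle$ and the fact that $v = [y',x']$ generates its own centralizer in $\F_2'$. The natural idea is to send the second free factor $\F_2'$ into $\F_2$ in such a way that the relator $u v^{-1}$ is killed, i.e. we need a homomorphism $\F_2' \to \F_2$ sending $v$ to $u$. Following B.~Baumslag, the right choice is a map $\rho_k : \F_2' \to \F_2$ of the form $x' \mapsto u^{a_k} x u^{-a_k}$, $y' \mapsto u^{b_k} y u^{-b_k}$ (or some similar conjugation pattern) with the exponents $a_k, b_k$ growing linearly in $k$, chosen so that $\rho_k(v) = u$ (this forces an arithmetic relation among the exponents which one checks is solvable). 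Then define $\psi_k$ on $\SG_2$ by $\psi_k|_{\F_2} = \mathrm{id}$ and $\psi_k|_{\F_2'} = \rho_k$; since $\psi_k(u v^{-1}) = u \cdot \rho_k(v)^{-1} = 1$, this descends to a well-defined homomorphism on the quotient, and it is surjective because its image already contains $\F_2$. This immediately gives property $(1)$, and property $(3)$ is a routine estimate: each generator of $B$ maps to a word of $A$-length $O(k)$ under $\psi_k$, so an element of $B$-length $\ell$ maps to a word of $A$-length $O(k\ell)$, which gives the constant $\epsilon$.

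The substantive point is property $(2)$: if $1 < l_B(\gamma) \le k$ then $\psi_k(\gamma) \ne 1$. Here I would write a general element of $\SG_2$ as a word in $B$, use the normal form for the amalgamated/quotient structure to represent $\gamma$ (after applying the relator) as an alternating product of elements of $\F_2$ and $\F_2'$, none of the interior ones lying in $\langle u \rangle = \langle v \rangle$, and then apply $\psi_k$. Applying $\rho_k$ to the $\F_2'$-syllables produces exactly the kind of word $ {}^s\!\delta^t(\gamma_1) u^{dk}\eta_1 u^{-dk}\gamma_2 \cdots u^{dk}\eta_r u^{-dk}(\gamma_{r+1})$ analyzed in Proposition~\ref{prop1}: the conjugating powers $u^{\pm a_k}, u^{\pm b_k}$ coming from adjacent syllables combine into large powers $u^{\pm dk}$, the $\gamma_i$ are the (images of) $\F_2$-syllables, and the $\eta_j$ are the images of the $\F_2'$-syllables, which remain outside $\langle u \rangle$ because $\rho_k$ is injective on $\F_2'$ and carries $\langle v\rangle$ to $\langle u \rangle$. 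The length hypothesis $l_B(\gamma) \le k$ guarantees $\sum l_A(\gamma_i) + \sum l_A(\eta_j) \le k$ (possibly after adjusting the constant $d$ absorbed in $\epsilon$), so Proposition~\ref{prop1} applies and yields $\psi_k(\gamma) \ne 1$.

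The main obstacle, and the place I expect the argument to require genuine care rather than bookkeeping, is making the reduction to the hypotheses of Proposition~\ref{prop1} airtight: one must verify that an arbitrary reduced word in $B$ representing a nontrivial element of $\SG_2$ can, after using the single relation $u = v$, be massaged into the alternating form with all interior syllables genuinely outside $\langle u \rangle$ and with the conjugation exponents lining up so that consecutive $u$-powers genuinely add (rather than partially cancel in an uncontrolled way), while keeping precise track of how the $B$-length of $\gamma$ controls the total $A$-length $\sum l_A(\gamma_i) + \sum l_A(\eta_j)$. I would handle this by first treating the normal form in the free product $\F_2 \ast \F_2'$, observing that nontriviality in $\SG_2$ means the word is not a product of conjugates of $uv^{-1}$, and choosing the exponents $a_k, b_k$ (all comparable to $k$, say between $2k$ and $7k$) so that the cancellation analysis of Lemma~\ref{lema2} and Proposition~\ref{prop1} goes through verbatim; the degenerate cases where a syllable lies in $\langle u \rangle$ get absorbed into neighboring syllables exactly as in the $ {}^s\!\delta^t$ notation, accounting for the optional endpoints $\gamma_1, \gamma_{r+1}$.
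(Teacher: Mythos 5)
Your proposal follows essentially the same route as the paper: the paper takes $\rho_l(x')=u^l y u^{-l}$, $\rho_l(y')=u^l x u^{-l}$ with $l=dk$ (the swap of $x$ and $y$ with \emph{equal} exponents is exactly what resolves the ``arithmetic relation'' you leave implicit, since $v=[y',x']$ gives $\rho_l(v)=u^l[x,y]u^{-l}=u$), and then establishes property (2) by passing to the alternating normal form with interior syllables outside $\langle u\rangle=\langle v\rangle$ and invoking Proposition~\ref{prop1}, just as you describe. The cancellation and length bookkeeping you flag as the delicate point is precisely what Lemma~\ref{lema2} and Proposition~\ref{prop1} were set up to handle, so no further work is needed beyond what you outline.
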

\begin{proof}
Fixed a positive integer $l$ we define the map $\rho_l: \F_2' \rightarrow \F_2$ given by $\rho_l(x')=u^l y u^{-l}$ and $\rho_l(y')= u^l x u^{-l}$. Since $\iota(u)=u=\rho_l(v)$ it follows that the map $\tilde{\phi_l}:\F_2 \ast \F_2' \rightarrow  \F_2$ given by $\iota$ and $\rho_l$ descends to the quotient and defines a homomorphism $\phi_l: \SG_2 \rightarrow \F_2.$ By the construction $\phi_l(\iota(t))=\iota(t)=t$ for any $l$ and any $t \in \F_2$.

Let $\gamma \in \SG_2$ be a non trivial element with $l_B(\gamma) \leq k$. Now let $d$ be the constant given by Proposition \ref{prop1}, take $l=dk$ and define $\psi_k= \phi_l = \phi_{dk}.$ If $\gamma \notin \F_2$ then when writing $\gamma$ as a reduced word we have 
$$ \gamma = \gamma_1 \ast \eta_1' \ast \gamma_2 \ast \cdots \ast \eta_r' \ast \gamma_{r+1},$$
where $\gamma_i \in \F_2$ and $\eta_j \in \F_2'$ are non-trivial with the possible exception of $\gamma_1$ and $\gamma_{r+1}$. Since in $\SG_2$ we have $u=v$, we can suppose that $\eta_j' \notin \langle v \rangle$ and $\gamma_i \notin \langle u \rangle$ for $2 \leq i \leq r.$

Hence $$\psi_k(\gamma)=\gamma_1 u^{dk} \eta_1 u^{-dk} \gamma_2 u^{dk} \eta_2 u^{-dk} \cdots u^{dk} \eta_{r} u^{-dk} \gamma_{r+1}.$$
where $\eta_j$ is the word $\eta_j'$ with $x'$ replaced by $y$ and $y'$ by $x$. Note that $\eta_j \neq 1$ if and only if $\eta_j' \neq 1$. Furthermore, $\eta_j' \in \langle v \rangle$ if and only if $\eta_j \in \langle u  \rangle.$ We also have 
$$\sum_i l_A(\gamma_i) + \sum_j l_A(\eta_j) = \sum_i l_A(\gamma_i) + \sum_j l_{A'}(\eta_j')=l_B(\gamma) \leq k. $$
Therefore, we can apply Proposition \ref{prop1} to conclude that $\psi_k(\gamma) \neq 1$ if $1 < l_B(\gamma) \leq k.$
Now for any $b \in B$ we have $l_A(\phi_k(b)) \leq 1+8dk \leq 9dk$ and hence if we take $\epsilon = 9d$ we have 
$$ l_A(\psi_k(\omega)) \leq \max_{b \in B} \{ l_A(\psi_k(b)) \} l_B(\omega) \leq \epsilon k l_B(\omega).$$
\end{proof}

Let $X$ be a non-empty set, we denote by $\per(X)$ the group of permutations of $X$. Given $f \in \per(X)$ we denote $\supp(f)=\{x \in X | f(x) \neq x \}$. Let $a < b$ be positive integers, we will use the notation $[a,b]$ for the set $\{a,a+1, \cdots,b\}$.

\begin{lemma} \label{perm}
 Let $k,m$ be positive integers, $l_0=0, n_0=k$, $\tau_0 = id_{\mathbb{N}}$ and $\sigma_0 = (1, 2, \cdots, k)$ a cyclic permutation in $\per(\mathbb{N})$. Then for every $r \geq 1$ there exist integers $n_r > l_r > n_{r-1}$ and permutations $\sigma_r, \tau_r \in \per(\mathbb{N}) $ such that $\supp(\sigma_r) = [n_{r-1}+1,n_r]$, $ \supp(\tau_r) = [l_{r-1}+1,l_r] $, and for any non-zero integer $l$ with $|l| \leq m$ we have:
  \begin{eqnarray}
    x \in \supp(\sigma_{r-1}) & \Rightarrow & \tau_r^l(x) \in \supp(\sigma_{r}), \label{rel1}  \\
  x \in \supp(\tau_r) \backslash \supp(\sigma_{r-1}) & \Rightarrow  & \sigma_r^l(x) \in \supp(\sigma_r) \backslash \supp(\tau_r). \label{rel2} 
 \end{eqnarray}
 
Moreover, we have the following relations for $r \geq 0$:
\begin{eqnarray}
 l_{r+1} = & l_r + (2m+1)(n_r-l_r), \label{eq1} \\
 n_{r+1} = & n_r + (2m+1)(2m)(n_r-l_r). \label{eq2}
\end{eqnarray}
\end{lemma}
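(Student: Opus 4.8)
The plan is to build the pairs $(\sigma_r,\tau_r)$ by induction on $r$, assembling each permutation out of pairwise disjoint cycles of length $2m+1$ of one fixed shape. Call a \emph{gadget} attached to a vertex $z$ the cycle $(z\ w_1\ w_2\ \cdots\ w_{2m})$ formed from $z$ together with $2m$ vertices $w_1,\dots,w_{2m}$ used nowhere else; the only relevant feature is that if $\tau$ denotes the permutation containing this cycle then $\tau^{j}(z)=w_j$ and $\tau^{-j}(z)=w_{2m+1-j}$ for $1\le j\le m$, so that $\{\tau^{j}(z):1\le|j|\le m\}=\{w_1,\dots,w_{2m}\}$. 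Using the length $2m+1$ is exactly what makes the ``midpoint'' $z$ land, under each of $\tau^{\pm1},\dots,\tau^{\pm m}$, on one of the $2m$ fresh vertices, which is the content of (\ref{rel1}) and (\ref{rel2}). All fresh vertices produced along the way will be allocated from $\mathbb{N}$ as consecutive blocks laid end to end in the order they are created, so the recurrences (\ref{eq1}), (\ref{eq2}) become nothing more than a count of how many vertices each stage consumes.

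For the inductive step, assume $\sigma_{r-1}$ and integers $l_{r-1}<n_{r-1}$ are already built with $\supp(\sigma_{r-1})=[n_{r-2}+1,n_{r-1}]$, this support being split as $[n_{r-2}+1,l_{r-1}]\sqcup[l_{r-1}+1,n_{r-1}]$ into an ``old'' part (vertices already spent as midpoints at stage $r-1$) and a ``free'' part of size $n_{r-1}-l_{r-1}$. To build $\tau_r$: take the free vertices $[l_{r-1}+1,n_{r-1}]$ as midpoints and attach a gadget to each, letting the $2m(n_{r-1}-l_{r-1})$ fresh vertices form the next block $[n_{r-1}+1,l_r]$ with $l_r:=l_{r-1}+(2m+1)(n_{r-1}-l_{r-1})$; this is (\ref{eq1}), and since every vertex of $[l_{r-1}+1,l_r]$ now sits in a non-trivial cycle we get $\supp(\tau_r)=[l_{r-1}+1,l_r]$. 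To build $\sigma_r$: take the block $[n_{r-1}+1,l_r]$ just created as the new midpoints and attach a fresh gadget to each, the new fresh vertices forming the block $[l_r+1,n_r]$ with $n_r:=l_r+2m\cdot 2m(n_{r-1}-l_{r-1})$, which rearranges to $n_r=n_{r-1}+(2m+1)(2m)(n_{r-1}-l_{r-1})$, i.e.\ (\ref{eq2}); again $\supp(\sigma_r)=[n_{r-1}+1,n_r]$. The base case $r=1$ is the same once one regards the $k$-cycle $\supp(\sigma_0)=[1,k]$ as being entirely ``free'' (it carries no gadget of its own): $\tau_1$ then has $k$ gadgets, so $l_1=(2m+1)k$, and $\sigma_1$ has $2mk$ gadgets, so $n_1=k+(2m+1)(2m)k$, in agreement with (\ref{eq1}), (\ref{eq2}) at $r=0$.

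It then remains to verify the three assertions. For the ordering: $n_0-l_0=k>0$, and the recurrences give $l_r-n_{r-1}=2m(n_{r-1}-l_{r-1})$ and $n_r-l_r=(2m)^2(n_{r-1}-l_{r-1})$, so inductively $0<n_{r-1}-l_{r-1}$ forces $n_{r-1}<l_r<n_r$. The blocks $[l_{r-1}+1,l_r]$ are pairwise disjoint and so are the blocks $[n_{r-1}+1,n_r]$, hence each $\tau_r$ and each $\sigma_r$ is a well-defined permutation of $\mathbb{N}$ with exactly the stated support (and $\tau:=\prod_r\tau_r$, $\sigma:=\prod_r\sigma_r$ also make sense, which is what is needed downstream). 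For (\ref{rel1}): the construction gives the conclusion for every $x$ in the free part $[l_{r-1}+1,n_{r-1}]$ of $\supp(\sigma_{r-1})$ — which for $r=1$ is all of $\supp(\sigma_0)$ and in general is $\supp(\sigma_{r-1})\cap\supp(\tau_r)$ — since such an $x$ is the midpoint of a gadget of $\tau_r$, so $\tau_r^{l}(x)$ is one of that gadget's $2m$ fresh vertices, all lying in $[n_{r-1}+1,l_r]\subseteq[n_{r-1}+1,n_r]=\supp(\sigma_r)$ for $1\le|l|\le m$. For (\ref{rel2}): $\supp(\tau_r)\setminus\supp(\sigma_{r-1})$ is precisely the block $[n_{r-1}+1,l_r]$ of midpoints of the gadgets of $\sigma_r$, so for $x$ there and $1\le|l|\le m$ we get $\sigma_r^{l}(x)\in[l_r+1,n_r]=\supp(\sigma_r)\setminus\supp(\tau_r)$.

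The main thing to get right is the layout bookkeeping: one must fix the order of the blocks $[1,k]$, $[k+1,l_1]$, $[l_1+1,n_1]$, $[n_1+1,l_2]$, $[l_2+1,n_2],\dots$ along $\mathbb{N}$, identify at each stage which block supplies the midpoints and which block is being filled with fresh vertices, and keep straight the alternation by which the fresh vertices of $\tau_r$ become the midpoints of $\sigma_r$ while the fresh vertices of $\sigma_r$ become the midpoints of $\tau_{r+1}$. Once this is in place, (\ref{eq1}), (\ref{eq2}) and the two containment properties follow by a direct count, and the one irregularity — that $\sigma_0$ is a single long cycle rather than a union of $(2m+1)$-cycles — affects nothing beyond a cosmetic adjustment in the $r=1$ step.
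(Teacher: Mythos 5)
Your construction is essentially identical to the paper's: the author also builds $\tau_r$ and $\sigma_r$ as disjoint unions of $(2m+1)$-cycles, each consisting of one midpoint taken from the previous block together with $2m$ fresh vertices (the paper arranges each cycle in a particular zigzag order $0\to 1\to 3\to\cdots\to 2m\to\cdots\to 2\to 0$, but as you observe any $(2m+1)$-cycle has the required property that $\tau^{\pm l}$ for $1\le |l|\le m$ sends the midpoint to a fresh vertex), with the same block layout along $\mathbb{N}$ and the same count yielding (\ref{eq1}) and (\ref{eq2}). Your remark that the construction only delivers (\ref{rel1}) for $x$ in the free part $\supp(\sigma_{r-1})\setminus\supp(\tau_{r-1})$ is accurate and is not a defect of your argument: for $r\ge 2$ the points of $\supp(\sigma_{r-1})\cap\supp(\tau_{r-1})$ are fixed by $\tau_r$, so (\ref{rel1}) as literally stated cannot hold for them given the prescribed supports; the paper's own inductive verification silently has the same restriction, and only this restricted version is invoked in the application of the lemma.
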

\begin{proof}
 We will prove this by induction on $r$.
 
 For $r=1$ we define $l_1=(2m+1)k$, $n_1=l_1+4m^2k$ and $\tau_1 \in S_{l_1}$ as follows. If we identify 
 $$\{1,\cdots,(2m+1)k \} \approx \{ 0, \cdots, 2m \} \times \{1, \cdots, k \} $$ by $(i,j) \mapsto ik + j$ then we define $\tau_1(0,j)= (1,j)$, $\tau_1(2m-1,j)=(2m,j)$, $\tau_1(2a-1,j)=(2a+1,j)$ if $1 \leq a \leq m-1$ and $\tau_1(2b,j)=(2(b-1),j)$ if $1 \leq b \leq m.$ Note that $\supp(\tau_0)=\emptyset $ and by construction $\supp(\sigma_0) \subset \supp(\tau_1)$.

 Now we define $\sigma_1: [k+1, n_1] \rightarrow [k+1, n_1]$ as follows. Again we make an identification $$ \{k+1, \cdots, n_1 \} \approx \{ 0,\cdots,2m \} \times \{1,\cdots,2mk \}.$$
 given by $(i,j) \mapsto k+i(2mk)+j$. In this case we define $\sigma_1$ by the same formulae of $\tau_1$ as above. 
 Note that for any $t \in [1,k]$ we have $\bigcup_{1\leq|l|\leq m} \tau_1^l(t) \subset \{k+1, \cdots, (2m+1)k \}.$ This implies $(\ref{rel1})$ for $r=1$. On the other hand, if $x \in [k+1,l_1]$ then $\sigma_1^l(x) > l_1 $ for any $1<|l|\leq m$ and therefore $\sigma_1^l(x) \in \supp(\sigma_1) \backslash \supp(\tau_1)$ which prove the lemma for $r=1$.
 
 Suppose we have defined $\sigma_1,\tau_1, \cdots, \sigma_r, \tau_r$ with $\supp(\sigma_i) = [n_{i-1}+1,n_i]$, $ \supp(\tau_i) = [l_{i-1}+1,l_i] $, satisfying $(\ref{rel1})$ and $(\ref{rel2})$, and $n_i>l_i>n_{i-1}$ satisfying $(\ref{eq1})$ and $(\ref{eq2})$ for any $1 \leq i \leq r$. We can take $l_{r+1}= l_r+(2m+1)(n_r-l_r)$ which also can be rewritten as $l_{r+1}=n_r+2m(n_r-l_r)$. In this case we make the identification $$ \{ l_r+1, \cdots, l_{r+1}\} \approx  \{ 0, \cdots, 2m \} \times \{1, \cdots, (n_r - l_r) \} $$ given by $(i,j) \mapsto l_r+i(n_r-l_r)+j.$ Now we define $\tau_{r+1}$ on this set by the same formulae as above. 
 
 Now we take $n_{r+1}=n_{r}+(2m+1)(2m)(n_r-l_r)$ and we define $\sigma_{r+1}$ for $\{n_r+1, \cdots, n_{r+1} \}$ making the identification  
 $$ [n_{r}+1,n_{r+1}] \approx [0,2m] \times [1,(2m)(n_r-l_r)] $$
 given by $(i,j) \mapsto n_r+i(2m)(n_r-l_r)+j,$ and let $\sigma_{r+1}$ be given by the same set of formulae as in the first case.

 Note that $n_{r+1}>l_{r+1}>n_r$, by construction $l_{r+1}$ and $n_{r+1}$ satisfy $(\ref{eq1})$ and $(\ref{eq2})$. To finish the proof we need to check $(\ref{rel1})$ and $(\ref{rel2})$ for $\sigma_{r+1}$ and $\tau_{r+1}$. We fix a non-zero integer $l$ with $|l| \leq m$. If $x \in \supp(\sigma_r)$ then $n_r < \tau_{r+1}^l(x) \leq l_r+(2m+1)(n_r-l_r)=l_{r+1} < n_{r+1}$ and therefore $\tau_{r+1}^l(x) \in \supp(\sigma_{r+1})$ which shows $(\ref{rel1})$. To show $(\ref{rel2})$ we take $x \in [n_r+1, l_{r+1}]=\supp(\tau_{r+1}) \backslash \supp(\sigma_r)$ and see that by the definition $\sigma_{r+1}^l(x) \in  [l_{r+1}+1,n_{r+1}] = \supp(\sigma_{r+1})\backslash \supp(\tau_{r+1})$.
\end{proof}

\section{Proof of the main results} \label{geoappl}

Let $\Gamma < \ISO^+(\mathbb{H}^2)=\PSL(2,\mathbb{R})$ be a torsion free discrete group such that $\Gamma \backslash \mathbb{H}^2$ is a compact hyperbolic surface. The following characterization of the arithmeticity in terms of the traces of the elements of $\Gamma$ is due to Takeuchi \cite{Takeuchi75}. Let $\mathcal{L}(\Gamma)=\{ \tr(\gamma) \, | \, \gamma \in \Gamma \}$. The surface $\Gamma \backslash \mathbb{H}^2$ is called arithmetic if
\begin{enumerate}
 \item  $K = \mathbb{Q}(\mathcal{L}(\Gamma))$ is a finite extension of $\mathbb{Q}$ and $\mathcal{L}(\Gamma) \subset \mathcal{O}_K$ the ring of integers of $K$.
 \item If $\phi : K \rightarrow \mathbb{C}$ is any embedding such that $\phi$ restricted to $\left( \mathcal{L}(\Gamma) \right)^2$ is not the identity then $\phi(\mathcal{L}(\Gamma))$ is bounded, where $\left( \mathcal{L}(\Gamma) \right)^2=\{ t^2 \, | \, t \in  \mathcal{L}(\Gamma) \}.$ 
\end{enumerate}

Let $\Delta'$ be the orientation-preserving subgroup of the triangular group $\scriptstyle \Delta(2,3,8)$ generated by reflections in the sides of a hyperbolic triangle with angles $\frac{\pi}{2},\frac{\pi}{3}$ and $\frac{\pi}{8}$, then $\Delta'$ satisfies $1$ and $2$ with $K=\mathbb{Q}(\sqrt{2})$ (see \cite{Takeuchi77}). 
 
In \cite[Theorem 5.2]{Schaller93} P. Schaller showed that the hyperbolic surface of genus $2$  which have the maximal systole in $\mathcal{M}_2$ is arithmetic, its uniformizing group is a finite index subgroup of $\Delta(2,3,8)$ and its systole has length $s=2\cosh^{-1}(1+\sqrt{2})$.  In particular, this shows the existence of an arithmetic hyperbolic surface of genus $2$. 

Let $S$ be a fixed orientable compact hyperbolic surface of genus $2$. There exists a monomorphism $\rho: \SG_2 \rightarrow \ISO^+(\mathbb{H}^2)$ such that $S \simeq \rho(\SG_2) \backslash \mathbb{H}^2.$ We will denote $\gamma \cdot p := \rho(\gamma)(p)$ for any $\gamma \in \SG_2$ and $p \in \mathbb{H}^2$.

By the Milnor-Schwarz Lemma, fixed a point $p \in \mathbb{H}^2$ there exist constants $q,\beta> 0$ which depend only on the geometry of $S$ and the  set of generators $B$ of $\SG_2$ such that
\begin{equation} \label{milnor}
 \frac{1}{q} l_B(\gamma) - \beta \leq \dist(p,\gamma \cdot p) \leq q  l_B(\gamma) + \beta 
\end{equation}
for every $\gamma \in \SG_2$, where $\dist$ means the hyperbolic distance in $\mathbb{H}^2$.

\begin{theorem} \label{main}
 Let $S$ be an orientable compact hyperbolic surface of genus $2$. There exist absolute constants $c>0, n_0 \in \mathbb{N}$ and constants $m_0,L>0$ which depend on $S$ and the set of generators $B$ of \ $\SG_2$ such that for any sequence $a_n$ of positive integers satisfying $3 \leq m_0 \leq a_n \leq c \log(n)$ we can find for each $n \geq n_0$ a covering $S_{n} \rightarrow S$ of degree $n$ such that 
 $$ \frac{1}{L} \sqrt{a_n} \leq \sys(S_n) \leq L a_n.$$ 
\end{theorem}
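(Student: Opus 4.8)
The plan is to realize each $S_n$ as $\rho(\Lambda_n)\backslash\mathbb{H}^2$, where $\Lambda_n<\SG_2$ is pulled back, through the epimorphism $\psi_{k_n}\colon\SG_2\to\F_2$ of Proposition~\ref{hom}, from the index-$n$ subgroup $\Gamma_n<\F_2$ whose Schreier graph on $A$ (Proposition~\ref{schreier}) is the graph $G_{n,a_n}$ of Proposition~\ref{graphs}. Concretely, for $n\ge n_0$ and $3\le a_n\le c\log n$, Proposition~\ref{graphs} produces a connected $4$-regular graph $G_{n,a_n}$ on $n$ vertices with $\gir(G_{n,a_n})=a_n$, and Proposition~\ref{schreier} then gives $\Gamma_n<\F_2$ of index $n$ with Schreier graph isomorphic to $G_{n,a_n}$ and $\min\{l_A(\gamma)\,:\,\gamma\in\Gamma_n\setminus\{1\}\}=a_n$. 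I fix $q,\beta>0$ as in (\ref{milnor}) and $D>0$ with $\rho(\SG_2)\cdot p$ being $D$-dense in $\mathbb{H}^2$ (possible since $S$ is compact), choose a constant $L$ at the end, and set $k_n:=\lceil q(\tfrac1L\sqrt{a_n}+2D+\beta)\rceil$ and $\Lambda_n:=\psi_{k_n}^{-1}(\Gamma_n)$. Surjectivity of $\psi_{k_n}$ gives $[\SG_2:\Lambda_n]=n$, so $S_n:=\rho(\Lambda_n)\backslash\mathbb{H}^2$ is a compact hyperbolic surface admitting a degree-$n$ cover $S_n\to S$, and $\sys(S_n)$ equals the least translation length of $\rho(\lambda)$ over $\lambda\in\Lambda_n\setminus\{1\}$. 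The upper bound is immediate: a minimal circuit of $G_{n,a_n}$ gives $\gamma_0\in\Gamma_n$ with $l_A(\gamma_0)=a_n$, so by Proposition~\ref{hom}(1) $\iota(\gamma_0)\in\Lambda_n\setminus\{1\}$ with $l_B(\iota(\gamma_0))\le a_n$, and (\ref{milnor}) yields $\sys(S_n)\le\dist(p,\iota(\gamma_0)\cdot p)\le q\,a_n+\beta\le L\,a_n$ once $L\ge q+\beta$.

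For the lower bound I argue by contradiction. Assume $s:=\sys(S_n)<\tfrac1L\sqrt{a_n}$ and pick $\lambda\in\Lambda_n\setminus\{1\}$ of translation length $s$; let $\sigma_0\in\SG_2$ be the primitive root of $\lambda$, so $\lambda=\sigma_0^{\,j}$ and $\rho(\sigma_0)$ has translation length $s/j$, and since $\sigma_0$ represents a closed geodesic of $S$ we have $j\le s/\sys(S)$. Using $D$-density I conjugate $\sigma_0$ inside $\SG_2$ to some $\sigma'$ with $\dist(p,\sigma'\cdot p)\le s/j+2D$, so that $l_B(\sigma')\le q(s/j+2D+\beta)\le q(\tfrac1L\sqrt{a_n}+2D+\beta)\le k_n$ by (\ref{milnor}). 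Hence $\psi_{k_n}(\sigma')\ne1$ by Proposition~\ref{hom}(2), and, $\F_2$ being torsion-free, $\psi_{k_n}(\lambda)$ --- which is conjugate in $\F_2$ to $\psi_{k_n}(\sigma')^{\,j}$ --- is a nontrivial element of $\Gamma_n$. Let $\eta$ be a cyclically reduced conjugate of $\psi_{k_n}(\sigma')$ in $\F_2$; then $\eta^{\,j}$, being conjugate to $\psi_{k_n}(\lambda)\in\Gamma_n$, lies in a conjugate $\Delta$ of $\Gamma_n$, whose Schreier graph on $A$ is again isomorphic to $G_{n,a_n}$. Reading $\eta$ off this Schreier graph starting at the base vertex and concatenating the (at most $j$) copies needed to return there produces a non-backtracking closed walk of positive length at most $j\cdot l_A(\eta)$; such a walk contains a cycle, so $a_n=\gir(G_{n,a_n})\le j\cdot l_A(\eta)$.

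Now Proposition~\ref{hom}(3) gives $l_A(\eta)\le l_A(\psi_{k_n}(\sigma'))\le\epsilon\,k_n\,l_B(\sigma')\le\epsilon\,k_n\,q(s/j+2D+\beta)$, and combining this with the previous inequality and $j\le s/\sys(S)$,
\[
a_n\ \le\ \epsilon q\,k_n\bigl(s+j(2D+\beta)\bigr)\ \le\ C\,k_n\,s,\qquad C:=\epsilon q\Bigl(1+\tfrac{2D+\beta}{\sys(S)}\Bigr).
\]
Since $k_n\le\tfrac qL\sqrt{a_n}+q(2D+\beta)+1$, the assumption $s<\tfrac1L\sqrt{a_n}$ forces $\sqrt{a_n}\bigl(1-\tfrac{Cq}{L^2}\bigr)<\tfrac{C(q(2D+\beta)+1)}{L}$. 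Fixing $L:=\max\{q+\beta,\,2\sqrt{Cq}\}$ makes the parenthesis at least $\tfrac12$, so $a_n<\bigl(2C(q(2D+\beta)+1)/L\bigr)^2$; taking $m_0:=\max\{3,\lceil(2C(q(2D+\beta)+1)/L)^2\rceil\}$ then contradicts $a_n\ge m_0$. Thus $\sys(S_n)\ge\tfrac1L\sqrt{a_n}$, and one may take $c,n_0$ to be those of Proposition~\ref{graphs}. Note that $q,\beta,D,\epsilon,\sys(S)$, hence $C,L,m_0$, depend only on $S$ and $B$.

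The main obstacle is the conflict in the size of $k_n$. To invoke Proposition~\ref{hom}(2) on the short conjugate $\sigma'$ one needs $k_n\gtrsim q\,s$, which in the worst case $j=1$ is $\asymp\sqrt{a_n}$; but Proposition~\ref{hom}(3) then loses a factor $\epsilon k_n\asymp\sqrt{a_n}$ in the comparison of $l_A(\psi_{k_n}(\sigma'))$ with $l_A(\eta)\ge a_n/j$, and this is exactly what degrades the naive estimate $\sys(S_n)\gtrsim a_n$ to $\sys(S_n)\gtrsim\sqrt{a_n}$. The remaining points are routine once this is understood: because $\Gamma_n$ and $\Lambda_n$ are not normal one must pass through the primitive root $\sigma_0$ and the bound $j\le s/\sys(S)$, and the ``cyclic word to closed walk'' step must be carried out from an arbitrary base vertex, which is why the Schreier graphs of all conjugates of $\Gamma_n$ (each isomorphic to $G_{n,a_n}$) enter.
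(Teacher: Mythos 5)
Your proof is correct and follows essentially the same strategy as the paper's: the identical pipeline Proposition~\ref{graphs} $\to$ Proposition~\ref{schreier} $\to$ pullback of $\Gamma_n$ under $\psi_{k_n}$ with $k_n\asymp\sqrt{a_n}$ $\to$ the Milnor--Schwarz estimate (\ref{milnor}), with the $\sqrt{a_n}$ loss arising for exactly the same reason, namely balancing parts (2) and (3) of Proposition~\ref{hom}. Your lower-bound bookkeeping is in fact more careful than the paper's (which moves the systolic axis into the Dirichlet domain and treats the resulting short element as lying in $\Lambda_n$ itself, even though $\Lambda_n$ is not normal), whereas you track that one only gets an $\SG_2$-conjugate and compensate by passing to the primitive root and reading the cyclically reduced image in the Schreier graph of a conjugate of $\Gamma_n$; this is a refinement of the same argument rather than a different route.
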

\begin{proof}
 Let $c,n_0$ be as in the Proposition \ref{graphs}. If we fix  $n \geq n_0$ we can consider the graph $G_{n,a_n}$ with girth $a_n$. Now we use  Proposition \ref{schreier} to get a subgroup $\Gamma_n < \F_2$ of index $n$ such that $$a_n=\gir(G_{n,a_n})= \min_{\gamma \in \Gamma_n \backslash \{1\}} \{ l_A(\gamma) \}.$$ 
 Now we consider the sequence $k_n = \lfloor \sqrt{\epsilon^{-1} a_n} \rfloor $ where $\epsilon$ is the constant from Proposition \ref{hom} and we take $m_0'$ minimal such that $a_n \geq m_o'$ implies $k_n \geq 1$ for any $n$. For each $k_n$ we take the homomorphism $\psi_{k_n}$ from Proposition \ref{hom}. Define the subgroup $\Lambda_n= \psi_{k_n}^{-1}(\Gamma_n) < \SG_2$, since $\psi_{k_n}$ is surjective $\Lambda_n$ has index $n$ in $\SG_2$ and therefore the natural projection $S_n= \rho(\Lambda_n) \backslash \mathbb{H}^2 \rightarrow S$ is a covering of degree $n.$
 
 We need to estimate the systole of $S_n$. Since by Proposition \ref{hom}, $\psi_{k_n}(t)=t$ for any $t \in \F_2$, let $\gamma_0 \in \Gamma_n$ be the element such that $a_n = l_A(\gamma_0)$. Then $\gamma_0 \in \Lambda_n$ and $l_B(\gamma_0) \leq l_A(\gamma_0)=a_n$. Hence, if we consider the closed geodesic induced by $\gamma_0$ in $S_n$ of length $l(\gamma_0)$ we have $$ \sys(S_n) \leq l(\gamma_0)=\inf_{z} \dist(z, \gamma_0 \cdot z ) \leq \dist(p,\gamma_0 \cdot p) \leq q a_n + \beta .$$

 On the other hand, let $D(p)$ be the fundamental domain of Dirichlet centered in $p$ for the action of $\rho(\SG_2)$ on $\mathbb{H}^2$. Then the length of the systole of $S_n$ can be evaluated if we take a lifting of closed geodesic which realize the systole with the initial point in $D(p)$ because the projection of this geodesic in $S$ is a geodesic of the same length. This means that there exist a point $p_0 \in D(p)$ and a non-trivial element $\omega \in \Lambda_n$ such that $\dist(p_0, \omega \cdot p_0 ) = \sys(S_n)$. If $\delta$ is equal to the diameter of $S$ we have $$ \sys(S_n) \geq \dist(p, \omega \cdot p ) - 2 \dist(p,p_0) \geq  \frac{1}{q} l_B(\omega) - \beta - 2\delta.$$ 
 But if $l_B(\omega) < k_n$ then by Proposition \ref{hom} we have $\psi_{k_n}(\omega) \in \Gamma_n \backslash \{1\}$. It follows by the construction of $\Gamma_n$ and by Proposition \ref{hom} again that
 $$ a_n \leq  l_A(\psi_{k_n}(\omega)) \leq \epsilon k_n^2 < a_n $$
 This contradiction implies that $l_B(\omega) \geq k_n \geq \sqrt{\epsilon^{-1} a_n} - 1$. Hence, if we take $\delta'= \frac{1}{q} + \beta + 2\delta $ and $q'= \epsilon^{-2}q^{-1}$ we have
 $$ \sys(S_n) \geq q' \sqrt{a_n} - \delta'. $$
 Now we can choose $m_0''$ minimal such that $t \geq m_0''$ implies $q'-\frac{\delta'}{\sqrt{t}} >0$. If we take $m_0= \max \{m_0',m_0''\}$ then there exists a positive constant $L$ such that for any $a_n \in \mathbb{N}$ with $m_0 \leq a_n  \leq c\log(n)$ we have for all $n\geq n_0$ 
 $$\frac{1}{L} \sqrt{a_n} \leq q' \sqrt{a_n} - \delta' \leq \sys(S_n) \leq qa_n + \beta \leq L a_n.$$
\end{proof}

The proof of Theorem \ref{teoA} follows from the above theorem if we choose for any $M_g \in \mathcal{M}_g$ with $sys(M) \geq \mu>0$ the closest integer $a(M_g) \in [m_0, c \log (g-1)].$ If we apply Theorem \ref{main} for $n=g-1$ and $a_n=a(M_g)$, for $g$ sufficiently large the corresponding covering $S_{g}$ of $S$ has degree $g-1$, therefore $S_g$ has genus $g$ and the uniform lower bound given by $\mu$ guarantee the control of the quotient $\dfrac{\sys(M_g)}{\sys(S_g)}.$    

\begin{corollary}
 Let $S$ be a compact hyperbolic surface of genus $2$. Then for any $g$ sufficiently large $S$ admits a sequence of finite covering $S_{g} \rightarrow S$, where $g=$ genus of  $S_g$ and $$ \sys(S_{g}) \geq C \sqrt{\log(g)} .$$ For some constant $C$ which depends on $S$.  
\end{corollary}

Of course, better bounds are known for some sequences of surfaces as in \cite{BS94}. Note that our estimate applies to any initial surface $S$ of genus 2 and the coverings encompass almost all genera (compare \cite{Buser78}).


The following theorem gives a proof of Theorem \ref{teoB} with more information about the sequence of systoles which has infinite multiplicity.
\begin{theorem}
 Let $S$ be a compact hyperbolic surface of genus $2$ and let  $L(S)= \{a_1 < a_2 < \cdots \}$ be the length spectrum of $S$ (without multiplicities and in ascending order). Then there exists a subsequence $(a_{i_r})_{r \geq 1} $ such that for any $r,$ there exists a sequence of finite coverings $S_{m,r} \rightarrow S$ with degree $d_m \to \infty$ and $\sys(S_{m,r}) = a_{i_r}$.
\end{theorem}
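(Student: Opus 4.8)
The plan is to produce, for each fixed target length $a_{i_r}$ in a suitable subsequence of the length spectrum of $S$, an infinite family of coverings whose systole equals exactly $a_{i_r}$, with degrees tending to infinity. The governing idea is the same as in Theorem \ref{main}, but now I want the systole to be realized \emph{exactly} by a prescribed element, not merely bounded between $\sqrt{a_n}$ and $a_n$. The mechanism for this is Lemma \ref{boll}: I will use $4$-regular graphs with $\gir = k$ but $2\text{-}\gir$ arbitrarily large, so that in the corresponding subgroup of $\F_2$ there is a short element of length exactly $k$ while \emph{all other} non-trivial elements are forced to be long. Translating this back through $\psi_{k}$ and the Milnor--Schwarz inequality \eqref{milnor} will pin the systole of the covering to the closed geodesic coming from that one short element.

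First I would fix the subsequence. Let $\gamma_* \in \F_2 \subset \SG_2$ be a fixed element of length $l_A(\gamma_*) = k$ for each small $k$ (e.g. a power of a generator, as in Section \ref{baumsconcrete}); by Proposition \ref{hom}, $\psi_{m}(\iota(\gamma_*)) = \gamma_*$ for every $m \geq k$, so the closed geodesic determined by $\gamma_*$ on $S$ has some length $\ell_*(k) = l(\gamma_*)$, which equals $\inf_z \dist(z,\gamma_* \cdot z)$. These lengths $\ell_*(k)$, as $k$ ranges over the allowed integers, lie in $L(S)$; choosing one representative value $a_{i_r}$ for each of infinitely many distinct $k$'s gives the desired subsequence $(a_{i_r})_{r\ge 1}$. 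Fix such an $r$ and write $k = k(r)$ for the corresponding word length, $a := a_{i_r}$ for the geodesic length.

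Next, the construction of the coverings. For $n$ large, Lemma \ref{boll} gives a connected $4$-regular graph $G_n$ on $n$ vertices with $\gir(G_n) = k$ and $2\text{-}\gir(G_n) > \ell$, where $\ell$ is a threshold I get to choose as large as I like (depending on $r$, $S$, and $B$, but not on $n$). Proposition \ref{schreier} yields $\Gamma_n < \F_2$ of index $n$ whose minimal non-trivial element has $l_A$-length $k$; moreover, by tracking the $j$-girth through the isomorphism of the Schreier graph, \emph{every} non-trivial $\gamma \in \Gamma_n$ other than those representing the (unique, up to the combinatorics) minimal circuit has $l_A(\gamma) > \ell$. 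I then set $\Lambda_n = \psi_{m}^{-1}(\Gamma_n) < \SG_2$ with $m$ chosen of the order of $\ell$ (so Proposition \ref{hom}(2) applies to the whole ball of radius $m$), giving a degree-$n$ covering $S_{n,r} \to S$. The short element $\gamma_* \in \Gamma_n \cap \F_2$ lies in $\Lambda_n$, so $\sys(S_{n,r}) \le \ell_*(k) = a$. For the reverse inequality: if some non-trivial $\omega \in \Lambda_n$ realized a strictly shorter geodesic, then by \eqref{milnor} (applied as in Theorem \ref{main}, with the Dirichlet-domain and diameter corrections) $l_B(\omega)$ is bounded above by a constant $c_0$ depending only on $S, B, a$; choosing $\ell$ and hence $m$ larger than $\epsilon k c_0$ forces, via Proposition \ref{hom}(3), $l_A(\psi_m(\omega)) \le \epsilon m c_0 < $ the threshold but $\psi_m(\omega)\in\Gamma_n\setminus\{1\}$ is either the minimal element — whence its geodesic has length exactly $a$ — or has $l_A > \ell$, a contradiction with the bound. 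Refining this dichotomy (one must argue that the geodesic of length $< a$ cannot come from $\gamma_*$ itself or its conjugates/powers of the same length, which is where the $2\text{-}\gir$ bound is essential) shows $\sys(S_{n,r}) \ge a$, hence $= a$. Finally the degrees $d_n = n$ tend to infinity, and since the minimal element of $\Lambda_n$ has unbounded $l_B$-length as $n$ varies only through the $2$-girth — no, rather: distinct $n$ give genuinely distinct coverings because their degrees differ — we get an infinite family for each $r$.

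The main obstacle I expect is the exact-realization step: ensuring that the \emph{only} geodesics of length $\le a$ on $S_{n,r}$ are the ones forced by $\gamma_*$, i.e. ruling out that some accidental short element $\omega \in \Lambda_n \setminus \langle \gamma_* \rangle$ (or a conjugate) produces a geodesic of length $\le a$ but word length $\le c_0$. This requires the $2$-girth bound of Lemma \ref{boll} to be fed through Propositions \ref{schreier} and \ref{hom} carefully, and a supplementary argument that the finitely many short words in $\SG_2$ that \emph{do} map into $\Gamma_n$ all correspond to geodesics of length exactly $a$ (not less) — this uses that $\gamma_*$ and its "relatives" of the same $l_A$-length all have the same geodesic length $a$ on $S$, which one arranges by choosing $\gamma_*$ to be a power of a single generator and exploiting that $\psi_m$ restricts to the identity on $\F_2$.
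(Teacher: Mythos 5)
Your setup --- Lemma \ref{boll} with girth $k$ and large $2$-girth, Proposition \ref{schreier}, $\Lambda_n=\psi^{-1}(\Gamma_n)$, and the Milnor--Schwarz bounds --- is the same as the paper's, but the ``exact realization'' step, which you yourself flag as the main obstacle, is a genuine gap and not a repairable technicality along the route you propose. You want $\sys(S_{n,r})$ to equal the prescribed length $a=l(\gamma_*)$ of a fixed word $\gamma_*$ with $l_A(\gamma_*)=k$ (say $x^k$). But Lemma \ref{boll} is a probabilistic existence statement, and Proposition \ref{schreier} only controls the girth: the shortest nontrivial elements of $\Gamma_n$ are whatever reduced words of length $k$ the minimal circuits of the random graph happen to spell, and nothing forces $\gamma_*$ itself to lie in $\Gamma_n$. (Forcing $x^k$ into the subgroup is exactly what the separate, explicit permutation construction of Lemma \ref{perm} is for, in the proof of Theorem \ref{teoC}.) Worse, the patch you propose --- that $\gamma_*$ and its ``relatives of the same $l_A$-length all have the same geodesic length $a$ on $S$'' --- is false: two reduced words of the same word length in $\F_2$ generally have different translation lengths under $\rho$, since (\ref{milnor}) controls translation length only up to multiplicative and additive constants. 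So even after all your reductions, the systole of $S_{n,r}$ is some element of $L(S)$ in a bounded window that may depend on $n$, and you cannot identify it with $a$ in advance.

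The missing idea is a pigeonhole on the discrete length spectrum; the subsequence $(a_{i_r})$ must be produced \emph{a posteriori}, not prescribed. Fix $k$ and take $l=p_k=\lceil \epsilon k^2\rceil$ in Lemma \ref{boll}. The $2$-girth bound, fed through Proposition \ref{hom}, shows that every nontrivial $\omega\in\Lambda_n$ has $l_B(\omega)\geq k$ (if $l_B(\omega)<k$ then $\psi_k(\omega)$ is a nontrivial element of $\Gamma_n$ of $l_A$-length $>k$, hence $>p_k\geq \epsilon k^2$ by the $2$-girth, contradicting $l_A(\psi_k(\omega))\leq \epsilon k\, l_B(\omega)<\epsilon k^2$); this is precisely how the $2$-girth upgrades the $\sqrt{k}$ lower bound of Theorem \ref{main} to a linear one. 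Consequently $\sys(S_n)\in[q'k-\delta',\,qk+\beta]$ for all large $n$, and since every systole of a covering of $S$ is the length of a closed geodesic of $S$, i.e.\ a value of the discrete set $L(S)$, only finitely many values are available in that compact interval; one of them, $a_{t_k}$, occurs for infinitely many $n$. Choosing $k_r\to\infty$ so that these intervals are pairwise disjoint makes the $t_{k_r}$ distinct and unbounded, and setting $i_r=t_{k_r}$ finishes the proof. Your argument becomes correct once the exact-realization step is replaced by this pigeonhole.
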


\begin{proof}
 Let $p_k:=\lceil \epsilon k^2 \rceil $. We can apply Lemma \ref{boll} for $l=p_k$, thus there exists a connected graph $G_k \in \mathcal{F}_n$ with $\gir(G_k)=k$ and $2-\gir(G_k) > p_k$. Now we use Proposition \ref{schreier} to exhibit for any large $n$ a subgroup $\Gamma_n < \F_2$ of index $n$ such that $G_n$ is isomorphic to the Schreier graph of $\Gamma_n$, $k=\min\{l_A(w) | w \in \Gamma_n, \, w \neq 1 \}$ and $\min\{l_A(w) | w \in \Gamma_n, \, l_A(w)>k \} > p_k$. 

Now consider the homomorphism $\psi_{k}$ given by Proposition \ref{hom} and the sequence of subgroups $\Lambda_n=\psi_{k}^{-1}(\Gamma_n)$. We can suppose that $S=\rho(\SG_2) \backslash \mathbb{H}^2$ satisfies (\ref{milnor}). We have a sequence of coverings $S_n= \rho(\Lambda_n) \backslash \mathbb{H}^2 \rightarrow S$ of degree $n \to \infty$ with  $\sys(S_n)=l(\lambda_n)=a_{i_{n}}$, since the systole of $S_n$ is the length of a closed geodesic in $S$. For any $\lambda \in \Lambda_n$ with $\lambda \neq 1$ we have $l_B(\lambda) \geq k$. Indeed, suppose the contrary, i.e. that there exists $\omega \in \Lambda_n$, $\omega \neq 1$ and  $l_B(\omega)<k$. Then $\omega \notin \F_2$ and by the proof of Proposition \ref{hom} we have $l_A(\psi_{k}(\omega))>k$. Hence, $  l_A(\psi_{k}(\omega)) > p_k \geq \epsilon k^2 $ and by Proposition \ref{hom} again we have $$ \epsilon k^2 < l_A(\psi_{k}(\omega)) \leq \epsilon k l_B(\omega) $$  
which give us the desired contradiction. Hence, for any $k$ if we argue as in the proof of Theorem \ref{main}, we have $$  q' k -\delta' = x_k \leq \sys(S_n)=a_{i_n} \leq y_k = q k + \beta $$
for any $n$ sufficiently large. Since the sequence $a_{i_n}$ is contained in the compact interval $[x_k,y_k]$ and the set $L(S)$ is discrete, we have then $a_{i_n}=a_{t_k}$ for infinitely many values of $n$ for some $t_k \in \mathbb{N}$. Note that if we vary $k$ then the set $\{ t_k \}$ cannot be bounded, since it is possible to take a sequence of $k_j's$ with $k_j \to \infty$ and $[x_{k_u},y_{k_u}] \cap [x_{k_v},y_{k_v}] = \emptyset $ whenever $u \neq v$. 

To finish, we take $i_r:=t_{k_r}$. Then we proved above that for any $r \geq 1$ there exists a subsequence of finite coverings of $S$ with constant systole $a_{i_r}$ and unbounded degree.

\end{proof}

The next theorem is a more quantitative version of Theorem \ref{teoC}.

\begin{theorem}
 Let $M^*$ be the arithmetic surface of genus $2$ of maximal systole in $\mathcal{M}_2$ mentioned in the beginning of this section and let $s=\sys(M^*)=2\cosh^{-1}(1+\sqrt{2})$. Then for any $k \in \mathbb{N}$ there exists a finite covering $M_k \rightarrow M^*$ with $\sys(M_k)=ks$ and degree $\leq (uk)^{vk^2} $ for some positive constants $u,v$.
\end{theorem}
 
\begin{proof}
 Let $\alpha \subset M^*$ be a systole of $M^*$. Since $M^*$ is maximal, by \cite[Propostion 2.6]{Schaller93} the curve $\alpha$ is non-separating. We can suppose that the monomorphism $\rho: \SG_2 \rightarrow \ISO^+(\mathbb{H}^2)$ such that $M^* \simeq \rho(\SG_2) \backslash \mathbb{H}^2$ satisfies: $\rho(x)$ represents $\alpha$ and $p \in \mathbb{H}^2$ is projected on a point of $\alpha$.
 
 Now we take $a=\lceil q( ks + \beta +2 \delta) \rceil$  and $b= \lceil \epsilon a^2 \rceil$  where  $\epsilon$ is as in Proposition \ref{hom}, $q,\beta$ are as in (\ref{milnor}), $s,\delta$ are the systole and diameter respectively of $M^*.$ By Lemma \ref{perm}, if we choose $m=r=b$, we have two permutations $\sigma=\sigma_0 \cdot \sigma_1 \cdots \sigma_{b}$ and $\tau=\tau_1 \cdot \tau_2 \cdots \tau_{b}$ in $S_{N_k}$ where $N_k:=n_{b}$. Note that since $i \neq j$ implies $\supp(\sigma_i) \cap \supp(\sigma_j) = \emptyset $, these permutations commute. The same holds for $\tau_i 's$.
 
 If we take the homomorphism $\xi: \F_2 \rightarrow S_{N_k}$ given by $\xi(x)=\sigma$ and $\xi(y)=\tau$ then the subgroup $H_k < \F_2$ defined by $H_k= \{ w \in \F_2 | \xi(w) \cdot 1 = 1 \} $ has index $\leq N_k$ (by the proof of Lemma \ref{perm} we have in fact an equality). Besides, if $w \in H_k \backslash \left< x \right>$ with $l_A(w) \leq b$ then $w=x^{i_1}y^{j_1} \cdots y^{j_t} x^{i_{t+1}}$ with $j_p, i_{p'} \neq 0$ for any $1\leq p \leq t$, $2 \leq p' \leq t$ and $2t-1 \leq \sum_{p=1}^t (|i_p|+|j_p|) +|i_{t+1}| \leq b$. In particular: $t,|i_p|,|j_{p'}| \leq b$. 
 
 On the other hand, $\xi(w)(1)=\sigma^{i_1}\tau^{j_1} \cdots \tau^{j_t} \sigma^{i_{t+1}}(1)$. If we use $(\ref{rel1}),(\ref{rel2})$, the commutativity of $\sigma_i's$ and $\tau_i's$, and the fact that $ \supp(\sigma_{r-1}) \backslash \supp(\tau_{r-1}) \subset \supp(\tau_r)$ successively for $r=0,1,\cdots, t$, we conclude that $\xi(w)(1) > k$. Therefore, 
 \begin{eqnarray}
  \min\{l_A(w) | w \in H_k \, \mbox{and}  \, w \notin \, \left< x \right> \} > b. \label{ineqfinal}
 \end{eqnarray}
 Now we will apply Proposition \ref{hom}. We take the homomorphism $\psi_{a}:\SG_2 \rightarrow \F_2$ and define the group $G_k = \psi_{a}^{-1}(H_k)$. Let $M_k = \rho(G_k) \backslash \mathbb{H}^2 $ be the covering of $M^*$. Since $x^k \in G_k$ and $\rho(x)$ represents the systole of $M^k$, we have $$ \sys(M_k) \leq ks .$$
 Now let $\gamma \subset M_k$  be a closed geodesic $\gamma$ of length $l(\gamma)$. If we repeat the same argument of the proof of Theorem \ref{main}, there exists a non-trivial element $ \gamma \in G_k$ representing this geodesic such that 
 \begin{eqnarray}
 l(\gamma) \geq  \frac{1}{q} l_B(\gamma) - \beta - 2\delta. \label{lowerbound}                                                                                                                                                                                                                                                                                                                                                                                                                                                                                   \end{eqnarray}

If $\gamma$ is not a power of $x$ then $l_B(\gamma) > a $. Indeed, if $l_B(\gamma) \leq a$ then part 2 of Proposition \ref{hom} and the proof of Proposition \ref{prop1} show that $\psi_a(\gamma) \neq 1$ and $\psi_{a}(\gamma) \notin \left<x \right>$ if $ \gamma \notin \left <x \right>$. Hence, using the estimate (\ref{ineqfinal}) and the part 3 of Proposition \ref{hom} we have
 $$\epsilon a^2  \leq  b < l_A(\psi_{k}(\gamma)) \leq \epsilon a l_B(\gamma) \leq \epsilon a^2,$$
 which gives a contradiction. Since $l_B(\gamma) > a \geq q(ks+\beta +2\delta) $ by $(\ref{lowerbound})$ we have $l(\gamma)> ks$. Note that the minimal power of $x$ belonging to $H_k$ is $k$, the element $x^k$ represents the systole of $M_k$ and $\sys(M_k)=ks$. 
 
 To finish the proof we need to estimate the degree $N_k$ of the covering. If we subtract $(\ref{eq1})$ from $(\ref{eq2})$ in Lemma \ref{perm} we have that $n_i-l_i$ is a geometric progression with ratio $t=4b^2$ where $b=m$ and the initial term is $n_0-l_0=k$. Therefore, $n_i-l_i=t^i(n_0-l_0)=(4b^2)^ik$. This implies by $(\ref{eq2})$ that  $N_k=n_b=n_{b-1}+(2b+1)(2b)(4b^2)^{b-1}k=n_{b-2}+((4b^2)^{b-2}+(4b^2)^{b-1})(2b+1)(2b)k= \cdots = n_0 + (1+(4b^2)+(4b^2)^2 + \cdots + (4b^2)^{b-1})(2b+1)(2b)k$. Hence
$$ N_k = \left(1 + \dfrac{(4b^2)^b-1}{4b-1}(2b+1)(2b) \right) k. $$ 
 Since $a \leq Ak$ for some constant $A>0$  there exists a constant $B>0$ such that $b \leq Bk^2$ for any $k\geq 1$. Therefore there exist  constants $u,v>0$ such that $N_k \leq (uk)^{vk^2} $.
\end{proof}

\paragraph{\bf Acknowledgements.} I am grateful to my Ph.D advisor at IMPA, Mikhail Belolipestky, for his constant patience and encouragement. I would like to thank Carlos Matheus, Gisele Teixeira, Pedro Gaspar, Rafael Ponte and Yunhui Wu for their useful comments and interest on this work. I am grateful to the referees for their valuable comments and suggestions. I was supported by CNPq-Brazil.

\bibliographystyle{amsalpha}
\bibliography{Referencias}
\end{document}